\documentclass[12pt]{amsart}
\usepackage{amssymb, amscd}


\usepackage{mathptmx}

\usepackage[latin1]{inputenc}

%
%

%
%
\newcommand{\R}{\mathbb{R}}
\newcommand{\N}{\mathbb{N}}

\newcommand{\Z}{\mathbb{Z}}

%
%

\newcommand{\Ec}{\mathcal{E}}
\newcommand{\Fc}{\mathcal{F}}
\newcommand{\Gc}{\mathcal{G}}

\newcommand{\Wc}{\mathcal{W}}
%
%

%
%

\DeclareMathOperator{\pnt}{\raise 0.5mm \hbox{\large\bf.}}

\DeclareMathOperator{\Tor}{Tor}
\DeclareMathOperator{\conv}{conv}
\DeclareMathOperator{\cone}{cone}

\DeclareMathOperator{\GL}{GL}
\DeclareMathOperator{\inom}{in}
\DeclareMathOperator{\gT}{gT}

\DeclareMathOperator{\gin}{gin}

\DeclareMathOperator{\wt}{wt}

\DeclareMathOperator{\GF}{GF}
\DeclareMathOperator{\gGF}{gGF}

\DeclareMathOperator{\aff}{aff}

\DeclareMathOperator{\depth}{depth}
\DeclareMathOperator{\gdepth}{gdepth}
\DeclareMathOperator{\projdim}{projdim}
\DeclareMathOperator{\charac}{char}

%
%


\newtheorem{thm}{\bf Theorem} [section]
\newtheorem{lemma}[thm]{\bf Lemma}
\newtheorem{cor}[thm]{\bf Corollary}
\newtheorem{prop}[thm]{\bf Proposition}

\theoremstyle{definition}
\newtheorem{defn}[thm]{\bf Definition}

\newtheorem{rem}[thm]{\bf Remark}
\newtheorem{ex}[thm]{\bf Example}

\theoremstyle{plain}
\newtheorem*{thm*}{Theorem}

%
%
\textwidth=15 cm \textheight=22 cm \topmargin=0.5 cm
\oddsidemargin=0.5 cm \evensidemargin=0.5 cm \footskip=40 pt

\parindent=0pt

%
%

\title{Algebraic Properties of Generic Tropical Varieties}

\author{Tim R\"omer}
\address{Universit\"at Osnabr\"uck, Institut f\"ur Mathematik, 49069 Osnabr\"uck, Germany}
\email{troemer@uos.de}

\author{Kirsten Schmitz}
\address{Universit\"at Osnabr\"uck, Institut f\"ur Mathematik, 49069 Osnabr\"uck, Germany}
\email{kischmit@uos.de}

\begin{document}

\begin{abstract}
We show that the algebraic invariants multiplicity and depth of a graded ideal in the polynomial ring are closely connected to the fan structure of its generic tropical variety in the constant coefficient case. Generically the multiplicity of the ideal is shown to correspond directly to a natural definition of multiplicity of cones of tropical varieties. Moreover, we can recover information on the depth of the ideal from the fan structure of the generic tropical variety if the depth is known to be greater than $0$. In particular, in this case we can see if the ideal is Cohen-Macaulay or almost-Cohen-Macaulay from its generic tropical variety.
\end{abstract}

\maketitle

\section{Introduction}

As a very new area of mathematics tropical geometry has received a lot of attention from various points of view in the last few years; see \cite{DEST,KAMAMA,MI2,SPST} and \cite{GA,ITMISH} for review articles. One approach to tropical geometry is to associate a combinatorial object to a projective algebraic variety which provides an effective tool for studying questions in algebraic geometry, see for example \cite{DR, GAMA2}. More precisely, the tropical variety $T(X)$ of an algebraic variety $X$ is the real-valued image of $X$ under some valuation map, see \cite{DR,JEMAMA,SPST}. In certain settings $T(X)$ has the structure of a polyhedral complex (see \cite{BIGR,JE}) and there is a practical characterization in terms of initial ideals given in \cite{SPST} and \cite[Theorem 4.2]{DR}. If the valuation on the ground field is trivial, $T(X)$ is a subfan of the Gröbner fan of the ideal $I$ defining $X$. We will only consider this constant coefficient case and we will define the tropical variety as a fan associated to $I$ instead of $X$. In this situation the ideal $I$ need not be a radical ideal. So for our purposes let $K$ be an infinite field and $K[x_1,\ldots,x_n]$ be the polynomial ring in $n$ variables over $K$. In this setting the tropical variety $T(I)$ of a graded ideal $I\subset K[x_1,\ldots,x_n]$ is defined to be the subfan of the Gröbner fan of $I$ which consists of all cones such that the corresponding initial ideal does not contain a monomial.

The tropical variety of an ideal depends on the choice of coordinates in the following sense. For $g\in \GL_n(K)$ the image $g(I)$ of a graded ideal $I\subset K[x_1,\ldots,x_n]$ is also a graded ideal and all important algebraic invariants of $I$ such as the dimension, multiplicity or depth are preserved under $g$. In fact, $g(I)$ can be considered as the ideal $I$ given in different coordinates. In general, for $I\subset K[x_1,\ldots,x_n]$ and $g\in \GL_n(K)$ we have $T(I)\neq T(g(I))$. We can, however, find a non-empty Zariski-open set $U\subset \GL_n(K)$, such that $T(g(I))$ is the same fan $\gT(I)$ for every $g\in U$; see \cite[Corollary 6.9]{TK}. The fact that $U$ is dense in $\GL_n(K)$ justifies the name generic tropical variety of $I$ for this fan. In \cite[Corollary 8.4]{TK} it was shown that $\gT(I)$ as a set depends only on the dimension of $I$. More precisely, for an $m$-dimensional ideal the underlying set of $\gT(I)$ is always the $m$-skeleton of a particular complete fan $\Wc_n$ in $\R^n$. In this paper we will show that under certain conditions we can recover information on the depth of $I$ in addition to the dimension from the fan structure of $\gT(I)$ induced by the Gröbner fan. As one of the main results we can completely describe generic tropical varieties of Cohen-Macaulay and almost-Cohen-Macaulay ideals as fans. With this we can determine if an ideal is Cohen-Macaulay or almost-Cohen-Macaulay from the fan structure of its generic tropical variety if we know the depth to be greater than $0$. Moreover, we show that the multiplicities associated to the maximal cones of $\gT(I)$ as done in \cite{DIFEST} correspond directly to the multiplicity of $I$.

Our paper is organized as follows. In Section 2 we will introduce the basic results and the notation needed in the sequel. In Section 3 we show that for an $m$-dimensional ideal the generic tropical variety is always a subfan of the $m$-skeleton of $\Wc_n$ by showing that the fan structure induced by $\Wc_n$ is the coarsest possible on the underlying set. This will be important for all following sections. The next two Sections 4 and 5 are devoted to the depth of $I$. In Section 4 we show that $\gT(I)$ is equal to the $m$-skeleton of $\Wc_n$ if and only if $I$ is Cohen-Macaulay or almost-Cohen-Macaulay with $\dim I=m$. In Section 5 we show that we can recover the depth of $I$ from $\gT(I)$ if we know it to be greater than $0$ and less than $\dim I-1$. We also give more structural results depending on $\depth(I)$ on $\gT(I)$ as a fan for a special class of ideals. We then show that the multiplicities defined on the maximal cones of $T(I)$ as in \cite{DIFEST} generically behave in a nice way in Section 6. These multiplicities coincide with the multiplicity of $I$.

We thank Hannah Markwig and Bernd Sturmfels for useful suggestions for this paper and we are especially grateful to Diane Maclagan for many illuminating discussions.

\section{Preliminaries}

In the following let $K$ be an algebraically closed field of characteristic $0$ and $K[x_1,\ldots,x_n]$ be the polynomial ring in $n$ variables over $K$. The \emph{$\omega$-weight} $\wt_{\omega}(cx^{\nu})$ of some term $cx^{\nu}=cx_1^{\nu_1}\cdots x_n^{\nu_n}\in K[x_1,\ldots,x_n]$ is defined as $\wt_{\omega}(cx^{\nu})=\omega\cdot \nu$ for any $\omega\in\R^n$. For a homogeneous polynomial $f\in K[x_1,\ldots,x_n]$ with $f=\sum_{\nu\in\N^n} a_{\nu}x^{\nu}$ and $\omega\in\R^n$ the \emph{initial polynomial} $\inom_{\omega}(f)$ of $f$ consists of all terms of $f$ such that their $\omega$-weight $\omega\cdot \nu$ is minimal. We will use multiplicative term orders $\succ$ on the monomials of $K[x_1,\ldots,x_n]$ and define $\inom_{\succ}(f)$ to be the term $cx^{\nu}$ of $f$ for which $cx^{\nu}\succ dx^{\mu}$ for every other term $dx^{\mu}$ of $f$. For $\omega\in\R^n$ and a term order $\succ$ we can consider the refinement $\succ_{\omega}$. This is the term order which first compares terms by their $\omega$-weight and uses $\succ$ to break ties. Note that while initial polynomials with respect to $\omega$ are defined by taking terms of minimal $\omega$-weight, the symbol $\succ$ suggests that $\inom_{\succ}(f)$ is the ''largest'' term of $f$. The reason for considering this counterintuitive setup is that in Gröbner basis theory one usually considers the largest terms as initial terms, while in tropical geometry it is convenient to work with the minimal $\omega$-weight.

We will consider graded ideals $I\subset K[x_1,\ldots,x_n]$ and always assume $I\neq(0)$ if not stated otherwise. The dimension $\dim I$ of $I$ refers to the Krull dimension of the coordinate ring $K[x_1,\ldots,x_n]/I$. Since we assume $I\neq(0)$, we always have $\dim I<n$. The \emph{initial ideal of $I\subset K[x_1,\ldots,x_n]$ with respect to $\omega\in \R^n$} is defined as $\inom_{\omega}(I)=(\inom_{\omega}(f):f\in I)$.

For $I\subset K[x_1,\ldots,x_n]$ we define the \emph{tropical variety} of $I$ by $$T(I)=\left\{\omega\in\R^n:\inom_{\omega}(I) \text{ does not contain a monomial }\right\}.$$ This is a special case, called the \emph{constant coefficient case}, of the usual definition of a tropical variety as the image of a projective variety under a valuation map, see for example \cite{DR,SPST}. In this case $K$ is considered to have a trivial valuation, see \cite[Theorem 4.2]{DR}. Then the tropical variety $T(I)$ is a subfan of the \emph{Gröbner fan $\GF(I)$} of $I$ as was observed in \cite{BJSST}. Recall that the Gröbner fan is a complete fan in $\R^n$, where $\omega,\omega'\in\R^n$ are in the same relatively open cone if $\inom_{\omega}(I)=\inom_{\omega'}(I)$; see for example \cite{MORO} or \cite{ST}. Sometimes we denote the ideal $\inom_{\omega}(I)$ for a relatively open cone $\mathring{C}$  of $\GF(I)$ and $\omega\in\mathring{C}$ by $\inom_C(I)$.

We study the structure of the tropical variety under a generic coordinate transformation in the following sense. For $g\in\GL_n(K)$ we regard the $K$-algebra automorphism induced by
\begin{eqnarray*}
K[x_1,\ldots,x_n] & \longrightarrow & K[x_1,\ldots,x_n]\\
x_i                 & \longmapsto     & \sum_{j=1}^n g_{ji}x_j.
\end{eqnarray*}
In the sequel we identify $g$ with this automorphism and call both of them $g$. Note that this definition differs from \cite[Definition 2.5]{TK} by a transposition of the matrix $g$. However, this does not affect the results proved in \cite{TK}. We consider $\GL_n(K)$ equipped with the Zariski-topology. If $I$ is $0$-dimensional, for every $g\in\GL_n(K)$ the tropical variety $T(g(I))=\emptyset$, see \cite[Lemma 2.6]{TK}. We will therefore always assume that $\dim I>0$. In \cite[Corollary 6.9]{TK} it was shown that for a graded ideal $I\subset K[x_1,\ldots,x_n]$ with $\dim I>0$ there exists a Zariski-open set $\emptyset\neq U\subset\GL_n(K)$ such that $T(g(I))$ is the same fan for every $g\in U$. This fan is denoted by $\gT(I)$ and called the \emph{generic tropical variety} of $I$. If $g\in U$, then $g(I)$ is called a \emph{generic coordinate transformation} of $I$. Moreover, by \cite[Theorem 3.1]{TK} we know that there is also a \emph{generic Gröbner fan $\gGF(I)$} such that $\GF(g(I))=\gGF(I)$ as a fan for every $g\in U$. The monomial initial ideal $\inom_{\succ}(g(I))$ with respect to a term order $\succ$ is exactly the \emph{generic initial ideal $\gin_{\succ}(I)$} for $g\in U$. These generic initial ideals correspond to the maximal cones of $\gGF(I)$. In the following we will fix a nonempty Zariski-open subset $U\subset\GL_n(K)$ such that $\GF(g(I))=\gGF(I)$ and $T(g(I))=\gT(I)$ for every $g\in U$ and refer to it simply as $U$.

The generic tropical variety as a set is always equal to some skeleton of a particular complete fan $\Wc_n$ in $\R^n$. We recall that this fan is defined by the maximal cones $C_i=\left\{\omega\in\R^n: \omega_i=\min_k\left\{\omega_k\right\}\right\}$ for $i=1,\ldots,n$. Note that to define a fan in $\R^n$ or to show that two fans in $\R^n$ are the same it suffices to do this for the maximal cones. This follows from the fact that every cone in a fan is a face of a maximal cone, so all cones in a fan are determined by the maximal cones. Every $m$-dimensional cone $C_A$ in $\Wc_n$ for $m\in\left\{1,\ldots,n\right\}$ has the form $C_A=\left\{\omega\in\R^n: \omega_i=\min_k\left\{\omega_k\right\} \text{ for } i\in A\right\}$, where we have $A\subset \left\{1,\ldots,n\right\}$ with $\left|A\right|=n-m+1$. On the other hand every set $\emptyset\neq A\subset \left\{1,\ldots,n\right\}$ defines a cone of $\Wc_n$ in this way which we will denote by $C_A$. We let $\Wc_n^m$ be the $m$-skeleton of $\Wc_n$, that is the fan consisting of all cones of $\Wc_n$ of dimension less than or equal to $m$. In \cite[Corollary 8.4]{TK} it was shown that for a graded ideal $I\subset K[x_1,\ldots,x_n]$ with $\dim I=m$ the generic tropical variety $\gT(I)$ coincides with $\Wc_n^m$ as a set.

For a fan $\Fc$ in $\R^n$ we denote by $\left|\Fc\right|$ the set $\bigcup_{C\in \Fc} C$ (without its fan structure), where the union is taken over all cones of $\Fc$. The notation $C$ for a cone of $\Fc$ always refers to a closed cone. By $\mathring{C}$ we denote the relative interior of $C$. We say that a fan $\Ec$ in $\R^n$ \emph{refines} a fan $\Fc$ in $\R^n$, if for every relatively open cone $\mathring{C}$ of $\Ec$ there exists a relatively open cone $\mathring{D}$ of $\Fc$ with $\mathring{C}\subset \mathring{D}$. In Proposition \ref{sanogo} we show that it suffices to check this condition for the maximal cones of $\Ec$.

\section{Fan Structures on the set $\left|\Wc_n^m\right|$}

In this section we will always assume $0<m<n$. The aim is to show that $\Wc_n^m$ is the coarsest fan structure on the set $\left|\Wc_n^m\right|$. By this we mean that every fan $\Fc$ in $\R^n$ with $\left|\Fc\right|=\left|\Wc_n^m\right|$ refines $\Wc_n^m$ as a fan. For this we first prove that any fan on $\left|\Wc_n^m\right|$ is pure by proving this statement for any subset of $\R^n$ which permits a pure fan structure of dimension at most $n-1$. We repeatedly need the following Lemma.

\begin{lemma}\label{allofs}
Let $\Fc$ be a fan in $\R^n$ and $C$ a cone of $\Fc$. Let $\omega\in \mathring{C}$ and $(\omega_i)_{i\in\N}$ be a sequence such that $\omega_i\in\left|\Fc\right|\backslash C$ and $\lim_{i\rightarrow\infty}\omega_i=\omega$. Then there exists a cone $D$ in $\Fc$ containing a subsequence of $(\omega_i)_{i\in\N}$ such that $C$ is a proper face of $D$.
\end{lemma}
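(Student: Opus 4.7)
The plan is to exploit two standard facts: a fan has only finitely many cones (so pigeonhole applies to the sequence), and the intersection of two cones of a fan is a common face of both.

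First I would use finiteness. Since $\Fc$ has only finitely many cones and $\omega_i\in\left|\Fc\right|\setminus C$ for all $i$, by the pigeonhole principle there is some cone $D\in\Fc$, $D\neq C$, which contains infinitely many terms of the sequence. Pass to this subsequence; by abuse of notation call it $(\omega_i)_{i\in\N}$ again, so that $\omega_i\in D$ for every $i$. Because $D$ is closed and $\omega_i\to\omega$, we have $\omega\in D$.

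Next I would identify $C\cap D$ with $C$. By the fan axioms the intersection $C\cap D$ is a face of $C$ (and also of $D$). Since $\omega\in\mathring{C}$ and $\omega\in D$, we have $\omega\in\mathring{C}\cap (C\cap D)$. But a face of $C$ that meets the relative interior of $C$ must equal $C$ (this is the standard fact that $\mathring{C}$ is disjoint from every proper face of $C$). Hence $C\cap D=C$, i.e.\ $C\subset D$, and $C$ is a face of $D$.

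Finally I would rule out $C=D$. The subsequence was chosen inside $D\setminus\{$terms in $C\}$, so $D$ contains a point not in $C$; therefore $D\neq C$, and $C$ is a \emph{proper} face of $D$, as required. There is no real obstacle here beyond being careful about the subsequence extraction and invoking the two fan axioms correctly; the argument is entirely elementary once one commits to using finiteness of $\Fc$ and the face-intersection property.
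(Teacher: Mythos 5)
Your proof is correct and follows essentially the same route as the paper's: pigeonhole on the finitely many cones of $\Fc$ to extract a subsequence lying in a single cone $D$, use closedness of $D$ to get $\omega\in D$, and then use the fan axiom that $C\cap D$ is a common face together with the fact that a face meeting $\mathring{C}$ must be all of $C$. Your explicit final step ruling out $C=D$ (since $D$ contains points of the sequence, which lie outside $C$) is a slightly cleaner way of securing properness than the paper's remark that $\omega\in\partial D$, but the argument is the same.
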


\begin{proof}
Since $\omega_i\in \Fc$, there exists some other cone $C_i\neq C$ such that $\omega_i\in \mathring{C}_i$. But $\Fc$ has only finitely many cones, so there exists a subsequence $(\omega_{j_i})_{j_i\in \N}$ of $(\omega_i)_{i\in\N}$ such that $\omega_{j_i}\in D$ for one particular cone $D$ of $\Fc$. By the choice of $\omega_i$ we have $D\neq C$. Now $\lim_{i\rightarrow \infty} \omega_{j_i}=\omega$ and $D$ is closed, so $\omega\in D$. Because $\mathring{C}\cap \mathring{D}=\emptyset$, we have $\omega\in\partial D$. By assumption $C$ and $D$ intersect in a face of both of them. Since $\omega\in \mathring{C}$ is in this intersection, this face is $C$. Hence, $C\subsetneq D$ as a face.
\end{proof}

With this we can show in the following proposition that any fan structure on $\left|\Wc_n^m\right|$ is pure.

\begin{prop}\label{baumann}
Let $\Ec$ be a pure $m$-dimensional fan in $\R^n$ and $\Fc$ an arbitrary fan in $\R^n$ with $\left|\Fc\right|=\left|\Ec\right|$. Then $\Fc$ is also a pure $m$-dimensional fan.
\end{prop}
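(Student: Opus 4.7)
The plan is to establish purity of $\Fc$ by showing two inequalities on the dimensions of its cones: every cone has dimension at most $m$, and every cone of dimension strictly less than $m$ fails to be maximal. Combining these forces each maximal cone to have dimension exactly $m$.

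The upper bound is a dimension count. For any $C\in\Fc$, the relative interior $\mathring{C}$ is open in $\aff(C)$, a subspace of dimension $\dim C$. But $\mathring{C}\subset |\Fc|=|\Ec|$, and $|\Ec|$ is contained in the finite union of $m$-dimensional affine subspaces $\aff(E)$, one for each maximal cone $E$ of $\Ec$. An open subset of a $d$-dimensional affine space cannot be covered by finitely many affine spaces of dimension $m<d$ (by Baire, since each intersects $\aff(C)$ in a proper affine subspace), so $\dim C\le m$.

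The heart of the argument is the lower bound. I would fix a cone $C\in\Fc$ with $\dim C=d<m$, pick $\omega\in\mathring{C}$, and produce a cone $D\in\Fc$ with $C\subsetneq D$ via Lemma \ref{allofs}. Since $\omega\in |\Ec|$ and $\Ec$ is pure of dimension $m$, some cone of $\Ec$ containing $\omega$ is a face of an $m$-dimensional cone $E\in\Ec$, so $\omega\in E$. Let $V=\Span(C)$ and $W=\Span(E)$, so $\dim(V\cap W)\le d<m=\dim W$, and hence $\mathring{E}$ (which is open in $W$) contains a point $\omega'\notin V$. Then the sequence $\omega_i=(1-1/i)\omega+(1/i)\omega'$ lies in $\mathring{E}$, because a convex combination of a point in a closed convex set with a point in its relative interior, with positive weight on the interior point, stays in the relative interior. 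Moreover $\omega_i\notin V\supseteq C$, since $\omega\in V$ and $\omega'\notin V$. As $\omega_i\to\omega$ and $\omega_i\in |\Ec|\setminus C=|\Fc|\setminus C$, Lemma \ref{allofs} yields $D\in\Fc$ with $C$ as a proper face, so $C$ is not maximal.

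The main obstacle is constructing this sequence in a way that simultaneously stays inside the support $|\Fc|$ (which is a union of cones with possibly intricate incidence pattern) and avoids $C$. The trick is to work inside a single $m$-dimensional cone $E$ of the pure fan $\Ec$, using the dimension gap $d<m$ to pick $\omega'\in\mathring{E}\setminus\Span(C)$; the convex-combination construction then automatically keeps the sequence in $E\subset |\Ec|$ while immediately leaving the lower-dimensional subspace $\Span(C)$.
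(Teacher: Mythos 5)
Your proof is correct and follows essentially the same route as the paper: both arguments exploit the purity of $\Ec$ to manufacture a sequence in $\left|\Fc\right|\backslash C$ converging to a point of $\mathring{C}$ and then invoke Lemma \ref{allofs}. The only differences are cosmetic --- you make the upper bound $\dim C\leq m$ explicit and replace the paper's iteration (climbing from $D$ to an $m$-dimensional cone) by the direct observation that every cone of dimension less than $m$ is non-maximal, and your convex-combination construction of the sequence inside a single cone $E$ of $\Ec$ is a careful spelling-out of the paper's terse neighborhood argument.
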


\begin{proof}
Let $C$ be any cone in $\Fc$. Assume that $\dim C<m$ and let $\omega\in \mathring{C}$. Since for any open neighborhood $U(\omega)\subset \R^n$ of $\omega$ we have $\dim U(\omega)\cap C<m$, there always exists $v\in (U(\omega) \cap \left|\Fc\right|)\backslash C$. So if we choose a sequence $(\epsilon_n)_{n\in\N}$ with $\epsilon_n>0$ for every $n\in\N$ and $\lim_{n\rightarrow \infty} \epsilon_n=0$, there exists $v_n\in \left|\Fc\right|\backslash C$ with $\left|v_n-\omega\right|<\epsilon_n$. By Lemma \ref{allofs} we obtain a cone $D$ of $\Fc$ such that $C\subsetneq D$. Since $\dim D>\dim C$, either the proof is complete if $\dim D=m$, or we can apply the same procedure to $D$ instead of $C$. Either way we obtain an $m$-dimensional cone of which $C$ is a face after finitely many steps.
\end{proof}

Note that this immediately implies the following corollary which is a generalization of the fact that the tropical variety of prime ideals of dimension $m$ is a pure $m$-dimensional fan, see \cite{BIGR}.

\begin{cor}
Let $I\subset K[x_1,\ldots,x_n]$ be a graded $m$-dimensional ideal. Then $\gT(I)$ is a pure $m$-dimensional fan.
\end{cor}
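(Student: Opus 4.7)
The plan is to deduce this corollary essentially immediately from Proposition \ref{baumann} together with the set-theoretic description of $\gT(I)$ recalled from \cite[Corollary 8.4]{TK}. The key observation is that $\Wc_n^m$ is, by construction, a pure $m$-dimensional fan in $\R^n$: every maximal cone of $\Wc_n$ has dimension $n$, and its $m$-skeleton retains exactly the cones of dimension $\le m$, whose maximal elements are the $m$-dimensional cones $C_A$ with $|A|=n-m+1$. Hence $\Wc_n^m$ qualifies as the fan $\Ec$ in Proposition \ref{baumann}.

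Next, I would invoke \cite[Corollary 8.4]{TK} which was quoted in Section 2: for a graded ideal $I$ with $\dim I = m > 0$, the underlying set of $\gT(I)$ coincides with $|\Wc_n^m|$. Since $\gT(I)$ is by construction a subfan of the (generic) Gröbner fan, it is a fan in $\R^n$, so it provides a valid candidate for the fan $\Fc$ in Proposition \ref{baumann} with $|\Fc| = |\Wc_n^m| = |\Ec|$.

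Applying Proposition \ref{baumann} then forces $\gT(I)$ to be pure of dimension $m$, which is the claim. There is essentially no obstacle here, as all the work has been done: Proposition \ref{baumann} is the general statement that any fan structure on the support of a pure fan is itself pure of the same dimension, and the set-theoretic identification $|\gT(I)|=|\Wc_n^m|$ is already on record. The only thing to double-check is that the case $\dim I = m > 0$ is the one intended (so that the cited result applies); the statement implicitly excludes the degenerate situation $\dim I = 0$ treated earlier, where $\gT(I) = \emptyset$ anyway.
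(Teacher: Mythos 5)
Your proof is correct and is exactly the argument the paper intends: apply Proposition \ref{baumann} with $\Ec=\Wc_n^m$ (pure of dimension $m$) and $\Fc=\gT(I)$, using $|\gT(I)|=|\Wc_n^m|$ from \cite[Corollary 8.4]{TK}. The paper states the corollary follows "immediately" from Proposition \ref{baumann} and spells out this same reasoning later in the proof of Corollary \ref{rosenberg}.
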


To prove that a fan $\Ec\subset \R^n$ refines another fan $\Fc\subset \R^n$ it suffices to consider the maximal cones of $\Ec$. This will be the result of the next two statements.

\begin{lemma}\label{pasanen}
Let $D,C$ be cones in $\R^n$ such that $D\subset C$ and $D\cap \mathring{C}\neq\emptyset$. Then $\mathring{D}\subset\mathring{C}$.
\end{lemma}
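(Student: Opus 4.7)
I would prove this by contradiction, exploiting the defining property of faces of a cone.

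The plan is to suppose there exists a point $x \in \mathring{D}$ with $x \notin \mathring{C}$ and derive a contradiction with the hypothesis $D \cap \mathring{C} \neq \emptyset$. Since $D \subset C$, such an $x$ lies in $C \setminus \mathring{C}$, so $x$ belongs to some proper face $F$ of $C$. The key claim to establish is then that $D$ is entirely contained in $F$; combined with the fact that $F$ is disjoint from $\mathring{C}$ (as a proper face), this contradicts $D \cap \mathring{C} \neq \emptyset$.

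To prove $D \subset F$, I would take an arbitrary $y \in D$ and produce a point $y' \in D$ such that $x$ lies strictly between $y$ and $y'$. Concretely, set $y' = x + t(x - y)$ for a small parameter $t > 0$. Since $x, y \in D \subset \aff(D)$, the point $y'$ lies in $\aff(D)$, and as $t \to 0$ we have $y' \to x \in \mathring{D}$. Because $\mathring{D}$ is open in $\aff(D)$, for sufficiently small $t > 0$ we obtain $y' \in \mathring{D} \subset D$. Then
\[
x = \tfrac{1}{1+t}\, y' + \tfrac{t}{1+t}\, y
\]
expresses $x$ as a convex combination of $y, y' \in C$ with both coefficients strictly positive. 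The defining property of a face now applies: since $F$ is a face of $C$ and the interior of the segment $[y, y']$ meets $F$ (at $x$), both endpoints must lie in $F$. In particular $y \in F$, and as $y \in D$ was arbitrary, $D \subset F$.

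The only mild obstacle is the justification that $y'$ can be chosen inside $D$, which requires the observation that the line through $y$ and $x$ lies in $\aff(D)$ so that openness of $\mathring{D}$ inside $\aff(D)$ can be invoked. Once this is in place, the face-of-a-cone argument delivers the contradiction immediately, and the lemma follows.
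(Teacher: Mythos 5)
Your proof is correct and follows essentially the same route as the paper: locate the offending point of $\mathring{D}$ in a proper face $F$ of $C$, use a small perturbation inside $\mathring{D}$ (your $y'=x+t(x-y)$ plays the role of the paper's $p+\lambda(p-u)$) to force all of $D$ into $F$, and contradict $D\cap\mathring{C}\neq\emptyset$. The only cosmetic difference is that you invoke the extreme-set property of a face abstractly, whereas the paper unfolds it via the supporting hyperplane $H$ with $F=H\cap C$.
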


\begin{proof}
Let $p\in \mathring{D}$ and for some $\epsilon>0$ let $U=\left\{u\in \mathring{D}: \left|u-p\right|<\epsilon\right\}\subset \mathring{D}$ be a relatively open neighborhood of $p$ in $\mathring{D}$. If $p\in \partial C$, then there exists a face $F$ of $C$ with $p\in F$. Let $H=\left\{\omega\in\R^n: a\cdot\omega=0\right\}$ be a defining hyperplane of $F$, so $F=H\cap C$ and let $C\subset H^-=\left\{\omega\in\R^n: a\cdot\omega\leq 0\right\}$. Since $U\subset \mathring{D}\subset C$, we know that $a\cdot u\leq0$ for every $u\in U$. In addition we have $a\cdot p=0$, because $p\in F$. Assume there exists $u\in U$ such that $a\cdot u<0$. Then we can choose $0<\lambda<1$ very small such that $p+\lambda(p-u)\in \mathring{D}$. Moreover, $\left|(p+\lambda(p-u))-p\right|=\left|\lambda(p-u)\right|<\epsilon$, so $p+\lambda(p-u)\in U$. But $(p+\lambda(p-u))\cdot a=-\lambda u\cdot a>0$ which is a contradiction to $p+\lambda(p-u)\in C$. Hence, $a\cdot u=0$ for every $u\in U$. Thus $U\subset H$ and since $U$ is relatively open in $D$, we also have $\aff(D)\subset H$. But then $$D\subset \aff(D)\cap C\subset H\cap C=F$$ which is a contradiction to $D\cap \mathring{C}\neq\emptyset$. Hence, $p\notin \partial C$ and we get that $\mathring{D}\subset \mathring{C}$.
\end{proof}

\begin{prop}\label{sanogo}
Let $\Ec,\Fc\subset \R^n$ be two fans. Then $\Ec$ refines $\Fc$ as a fan if and only if for every maximal cone $C\subset \Ec$ there exists a cone $D\subset \Fc$ such that $\mathring{C}\subset\mathring{D}$.
\end{prop}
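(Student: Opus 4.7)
The forward implication is immediate: every maximal cone of $\Ec$ is in particular a cone of $\Ec$, so the definition of refinement specializes to the condition on maximal cones. So the content lies in the converse, which I plan to prove by reducing an arbitrary cone $C$ of $\Ec$ to its maximal supercone in $\Ec$ and then locating the correct face in $\Fc$ by means of Lemma \ref{pasanen}.

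Assume that for every maximal cone $\tilde{C}$ of $\Ec$ there is a cone $\tilde{D}$ of $\Fc$ with $\mathring{\tilde{C}} \subset \mathring{\tilde{D}}$. Given an arbitrary cone $C$ of $\Ec$, choose a maximal cone $\tilde{C}$ of $\Ec$ having $C$ as a face, and let $\tilde{D}$ be as above. Passing to closures yields $\tilde{C} \subset \tilde{D}$, hence $C \subset \tilde{D}$. Now fix any $\omega \in \mathring{C}$. Since $\tilde{D}$ is the disjoint union of the relative interiors of its faces, there is a unique face $D$ of $\tilde{D}$ with $\omega \in \mathring{D}$, and $D \in \Fc$ because $\Fc$ is closed under taking faces.

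The crux is then to establish $C \subset D$. Once this is in hand, $\omega$ witnesses $C \cap \mathring{D} \neq \emptyset$, and Lemma \ref{pasanen} (with $C$ playing the role of the lemma's $D$ and $D$ playing the role of its $C$) gives $\mathring{C} \subset \mathring{D}$, completing the proof. To prove $C \subset D$ I would recycle the supporting-hyperplane argument from Lemma \ref{pasanen}: write $D = \tilde{D} \cap H$ where $H = \{x \in \R^n : a \cdot x = 0\}$ is a supporting hyperplane with $\tilde{D} \subset H^- = \{x \in \R^n : a \cdot x \leq 0\}$. If some $u \in C$ satisfies $a \cdot u < 0$, then $\omega \in \mathring{C}$ guarantees $\omega + \lambda(\omega - u) \in \mathring{C}$ for sufficiently small $\lambda > 0$, which produces a point of $C \subset \tilde{D}$ with strictly positive $a$-value, contradicting $\tilde{D} \subset H^-$. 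Hence $a \cdot u = 0$ for every $u \in C$, so $C \subset H \cap \tilde{D} = D$.

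The main obstacle is really just this supporting-hyperplane step, but since Lemma \ref{pasanen} already carries out essentially the same manipulation in a cousin configuration, only a minor adaptation is needed.
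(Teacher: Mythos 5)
Your proof is correct. The skeleton matches the paper's: the forward direction is trivial, and for the converse both arguments pass from an arbitrary cone $C$ of $\Ec$ to a maximal supercone $\tilde{C}$, obtain $\tilde{D}\in\Fc$ with $\mathring{\tilde{C}}\subset\mathring{\tilde{D}}$ and hence $C\subset\tilde{D}$, locate the correct face $D$ of $\tilde{D}$, and finish with Lemma \ref{pasanen}. Where you differ is in how that face is located. The paper argues by contradiction with a descending induction: if $C$ misses $\mathring{\tilde{D}}$ then $C\subset\partial\tilde{D}$, so by a cited lemma of Bruns--Gubeladze $C$ lies in a proper face, and iterating must terminate because the lineality space has empty boundary; at the step where $C$ first meets a relative interior, Lemma \ref{pasanen} applies. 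You instead pick $\omega\in\mathring{C}$, take the unique face $D$ of $\tilde{D}$ with $\omega\in\mathring{D}$ (using that $\tilde{D}$ is the disjoint union of the relative interiors of its faces), and prove $C\subset D$ directly by the supporting-hyperplane perturbation already used inside the proof of Lemma \ref{pasanen}. Your route is more self-contained (no external citation, no induction) at the cost of redoing the hyperplane computation; the one case you leave implicit, $\omega\in\mathring{\tilde{D}}$ so that $D=\tilde{D}$ and no proper supporting hyperplane exists, is exactly the case where $C\subset D$ holds for free, so nothing is lost.
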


\begin{proof}
We have to show that for any cone $K\subset \Ec$ there exists a cone $L\subset \Fc$ such that $\mathring{K}\subset\mathring{L}$. If $K$ is maximal, this is true by assumption. Let $K\subset \Ec$ be not maximal. Then there exists a maximal cone $C\subset \Ec$ such that $K$ is a face of $C$. Moreover, we know that $\mathring{C}\subset \mathring{D}$ for some cone $D\in \Fc$. So $K\subset D$. Assume that such a cone $L$ does not exist. If $K\cap \mathring{D}\neq\emptyset$, this would imply $\mathring{K}\subset\mathring{D}$ by Lemma \ref{pasanen} and we could set $L=D$. Hence, $K\cap \mathring{D}=\emptyset$. Then $K\subset \partial D$ and by \cite[Lemma 1.5]{BRGU} it follows that $K\subset E$ for a proper face $E$ of $D$. Since $\dim E<\dim D$, we can use a suitable induction to obtain a sequence of cones in $\Fc$ of strictly decreasing dimension such that $K$ does not intersect the relative interior of each cone. The last cone in this sequence has to be the lineality space $A$ of $\Fc$. So by this induction we get $K\subset\partial A$ which is a contradiction to fact that $\partial A=\emptyset$. Hence, there has to exist a cone $L\subset \Fc$ such that $\mathring{K}\subset \mathring{L}$.
\end{proof}

The next result is a slight variation from \cite[Lemma 6.6]{TK}. The proof is elementary and very similar to the proof given there and will be omitted.

\begin{lemma}\label{oezil}
Let $C\subset \R^n$ be a cone and $\dim C=m$. Moreover, let $D_1,\ldots, D_s\subset \R^n$ be cones such that $$C\subset \bigcup_{i=1}^s D_i,$$ where $\dim D_1=m$ and $\dim D_2,\ldots,\dim D_s<m$. Then $C\subset D_1$.
\end{lemma}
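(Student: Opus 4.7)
The idea is that the lower-dimensional cones $D_2,\ldots,D_s$ cannot absorb a positive portion of $C$, so nearly all of $C$ must sit in $D_1$, and then closedness forces all of $C$ to lie there.

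First I would work inside the relative topology of $\aff(C)$, in which $\mathring C$ is an open set of dimension $m$. Since cones have their apex at the origin, $\aff(D_i) = \lin(D_i)$ is a linear subspace of dimension strictly less than $m$ for each $i\geq 2$. Hence $D_i\cap \aff(C) \subset \lin(D_i)\cap \lin(C)$ is contained in a proper linear subspace of $\aff(C)$, and therefore has empty interior in $\aff(C)$, and in particular empty interior in $\mathring C$.

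Next, set $A = \bigcup_{i=2}^s D_i$. This is closed as a finite union of closed cones, and by the previous paragraph $A\cap\mathring C$ is a closed subset of $\mathring C$ with empty interior (a finite union of sets with empty interior still has empty interior). Consequently $\mathring C\setminus A$ is open and dense in $\mathring C$. Now the hypothesis $C\subset \bigcup_{i=1}^s D_i$ gives
\[
\mathring C\setminus A \;\subset\; C\setminus A \;\subset\; D_1,
\]
so a dense subset of $\mathring C$ lies in the closed set $D_1$. Therefore $\mathring C \subset D_1$, and since $C = \overline{\mathring C}$ for the $m$-dimensional cone $C$ while $D_1$ is closed, we conclude $C\subset D_1$.

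The only step requiring any care is verifying that each $D_i$ with $i\geq 2$ meets $\mathring C$ in a set with empty interior relative to $\mathring C$; this is where the dimension hypothesis $\dim D_i < m$ is actually used, via the fact that a lower-dimensional linear subspace of $\aff(C)$ is nowhere dense in $\aff(C)$. Everything else is a density/closure argument and should go through without difficulty, matching the remark that the proof is elementary.
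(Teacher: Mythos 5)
Your proof is correct; the paper omits its own argument for this lemma (deferring to the analogous \cite[Lemma 6.6]{TK}), and your density argument is precisely the kind of elementary reasoning intended: the lower-dimensional cones meet $\mathring{C}$ in a nowhere dense set, so $D_1$ contains a dense subset of $\mathring{C}$ and hence, being closed, all of $C=\overline{\mathring{C}}$. One small point of hygiene: a finite union of sets with empty interior need not have empty interior in general, but your step is valid here because each $D_i\cap\mathring{C}$ ($i\geq 2$) is closed in $\mathring{C}$ (equivalently, because a finite union of proper linear subspaces of $\aff(C)$ is nowhere dense), and the argument also uses that the cones are closed and convex, which is the paper's standing convention.
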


With these prerequisites we can show that for $m<n$ the fan structure of $\Wc_n^m$ is actually the coarsest possible on the set $\left|\Wc_n^m\right|$ in the sense that every other fan $\Fc\subset \R^n$ with $\left|\Fc\right|=\left|\Wc_n^m\right|$ refines $\Wc_n^m$ as a fan. In particular, this will imply that $\gT(I)$ refines $\Wc_n^m$ as a fan for a graded ideal $I\subset K[x_1,\ldots,x_n]$ with $\dim I=m$.

\begin{prop}\label{almeida}
Let $m<n$ and $\Fc\subset \R^n$ be a fan with $\left|\Fc\right|=\left|\Wc_n^m\right|$. Then for every relatively open cone $\mathring{C}$ of $\Fc$ there exists a relatively open cone $\mathring{C}_A$ of $\Wc_n^m$ such that $\mathring{C}\subset \mathring{C}_A$.
\end{prop}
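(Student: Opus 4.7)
My plan is to reduce to the case of a maximal cone $C$ of $\Fc$ via Proposition~\ref{sanogo}, and then to identify a unique $m$-dimensional cone of $\Wc_n^m$ whose relative interior contains $\mathring{C}$. First I would invoke Proposition~\ref{sanogo} and treat only a maximal cone $C$ of $\Fc$. Since $\Wc_n^m$ is pure $m$-dimensional, Proposition~\ref{baumann} ensures that $\Fc$ is also pure, so $\dim C=m$. The cones of $\Wc_n^m$ meeting $C$ form a finite list $C_{B_1},\ldots,C_{B_s}$, and setting $D_i:=C\cap C_{B_i}$ I obtain $C=\bigcup_{i=1}^s D_i$, where each $D_i$ is a closed convex cone with $\dim D_i\leq m$.

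The main step is to show that exactly one of the $D_i$, say $D_1=C\cap C_A$, has dimension $m$. For this I would use the explicit description of the $m$-dimensional cones of $\Wc_n^m$: the affine hull of every such $C_B$ (where $|B|=n-m+1$) is the $m$-dimensional subspace $\{\omega\in\R^n:\omega_i=\omega_j\text{ for all } i,j\in B\}$, which is uniquely determined by $B$. Consequently, distinct $m$-dimensional cones of $\Wc_n^m$ have distinct $m$-dimensional affine spans. Hence $\aff(C)$ can coincide with $\aff(C_{B_i})$ for at most one $i$, which is the only way to produce $\dim D_i=m$. On the other hand, at least one $D_i$ must have dimension $m$, because $C$ is a finite union of the closed sets $D_i$ and $\dim C=m$, so uniqueness follows.

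With $D_1$ of dimension $m$ and the remaining $D_i$'s of strictly smaller dimension, Lemma~\ref{oezil} yields $C\subset D_1\subset C_A$. Since $D_1=C\cap C_A$ has dimension $m$ while every proper face of $C_A$ has dimension less than $m$, the intersection $C\cap\mathring{C}_A$ is nonempty, and Lemma~\ref{pasanen} then gives $\mathring{C}\subset\mathring{C}_A$, which is the desired conclusion. The main obstacle I anticipate is the uniqueness argument in the middle paragraph; once the explicit description of the maximal cones of $\Wc_n^m$ is brought into play, however, it reduces to a dimension comparison of linear subspaces, and the rest is a direct combination of Lemmas~\ref{oezil} and~\ref{pasanen}.
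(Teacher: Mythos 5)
Your proof is correct, and while its skeleton matches the paper's (reduce to maximal cones via Proposition~\ref{sanogo}, get $\dim C=m$ from Proposition~\ref{baumann}, finish with Lemmas~\ref{oezil} and~\ref{pasanen}), the key middle step is argued by a genuinely different route. The paper picks $\omega,v\in\mathring{C}$ lying in the relative interiors of two distinct maximal cones $C_{A_1},C_{A_2}$ of $\Wc_n^m$ and shows by an explicit coordinate analysis (two cases, according to whether $A_1\cap A_2$ is empty) that $\omega+v\notin\left|\Wc_n^m\right|$, contradicting convexity of $\mathring{C}$. You instead decompose $C=\bigcup_i (C\cap C_{B_i})$ and observe that a full-dimensional intersection forces $\aff(C)=\aff(C_{B_i})$, while distinct maximal cones of $\Wc_n^m$ span the distinct linear subspaces $\{\omega:\omega_i=\omega_j \text{ for } i,j\in B\}$ (here $\left|B\right|=n-m+1\geq 2$ is where $m<n$ enters, just as it does implicitly in the paper's case analysis). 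Your version is more conceptual: it isolates the one property of $\Wc_n^m$ that makes the coarseness statement work, namely that its maximal cones have pairwise distinct linear spans, and would apply verbatim to any pure fan with that property; the paper's version stays closer to the explicit combinatorics of $\Wc_n$. Two small points you may wish to spell out: that $\dim(C\cap C_{B_i})=m$ forces $\aff(C\cap C_{B_i})=\aff(C)=\aff(C_{B_i})$ because an $m$-dimensional convex subset of an $m$-dimensional convex set has the same affine hull, and that an $m$-dimensional convex set cannot be covered by finitely many closed convex sets of smaller dimension (used both for the existence of a full-dimensional piece and for $C\cap\mathring{C}_A\neq\emptyset$ at the end). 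Both are standard and do not constitute gaps.
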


\begin{proof}
For the proof note that $\Wc_n^m=\dot\bigcup_{\left|A\right|\geq n-m+1} \mathring{C}_A$ is the disjoint union of all relatively open cones of $\Wc_n$ whose defining set $A\subset\left\{1,\ldots,n\right\}$ has at least $n-m+1$ elements. By Lemma \ref{sanogo} it suffices to prove the condition for the maximal cones of $\Fc$. Let $C$ be a maximal cone of $\Fc$. Then $\dim C=m$, as $\Fc$ is pure by Proposition \ref{baumann}. Since $\dim\bigcup_{\left|A\right|> n-m+1} \mathring{C}_A=m-1<m$, there exists an $\omega\in \mathring{C}$ which is contained in the interior of some maximal cone $\mathring{C}_{A_1}$ of $\Wc_n^m$ with $\left|A_1\right|=n-m+1$. Assume there exists $v\in \mathring{C}$ such that $v\in \mathring{C}_{A_2}$ for a different maximal cone $\mathring{C}_{A_2}$ of $\Wc_n^m$. Then $\left|A_1\cap A_2\right|<n-m+1$. We have to consider two cases:
\begin{enumerate}
\item If $A_1\cap A_2\neq\emptyset$, the minimal coordinates of $\omega+v$ are attained exactly at the indices in $A_1\cap A_2$. But $\left|A_1\cap A_2\right|<n-m+1$, so $\omega+v\notin\left|\Wc_n^m\right|$. This is a contradiction to $\omega+v\in\mathring{C}\subset \left|\Wc_n^m\right|$.
\item Next we assume that $A_1\cap A_2=\emptyset$. Since $\dim C=m$, we can change the coordinates of $\omega$ which are not contained in $A_1$ independently from each other by adding or subtracting small real numbers without leaving $\mathring{C}$. The same is true for the coordinates of $v$ which are not in $A_2$. Hence, we can change every coordinate of $\omega+v$ by a small amount without leaving $\mathring{C}$, since $A_1\cap A_2=\emptyset$. But then we can assume that the minimum of the coordinates of $\omega+v$ is attained only once. Again we have $\omega+v\notin\left|\Wc_n^m\right|$ contradicting $\omega+v\in \mathring{C}\subset \left|\Wc_n^m\right|$.
\end{enumerate}
Hence, no element of $\mathring{C}$ can be contained in the relative interior of any maximal cone of $\Wc_n^m$ other than $C_{A_1}$. But then $$\mathring{C}\subset \mathring{C}_{A_1}\cup(\bigcup_{\left|A\right|>n-m+1} \mathring{C}_A).$$ Taking the topological closure this implies $C\subset C_{A_1}$ by Lemma \ref{oezil}. Since both cones have the same dimension, we also have $\mathring{C}\subset \mathring{C}_{A_1}$ by Lemma \ref{pasanen}.

\end{proof}

As a corollary Proposition \ref{almeida} implies that the generic tropical variety always refines $\Wc_n^m$ as a fan.

\begin{cor}\label{rosenberg}
Let $I\subset K[x_1,\ldots,x_n]$ be a graded ideal with $\dim I=m>0$. Then for every relatively open cone $\mathring{C}$ of $\gT(I)$ there exists a relatively open cone $\mathring{D}$ of $\Wc_n^m$, such that $\mathring{C}\subset \mathring{D}$.
\end{cor}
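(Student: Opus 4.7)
The plan is to observe that this corollary is essentially a direct application of Proposition \ref{almeida} to the fan $\Fc = \gT(I)$. So the main work is simply to verify that the hypotheses of Proposition \ref{almeida} are satisfied, and there is no genuine obstacle here.

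First I would note that since $I \neq (0)$ is a graded ideal, we have $\dim I < n$, so $m < n$ as required by Proposition \ref{almeida}. Next, $\gT(I)$ is by construction a fan in $\R^n$ (it is the subfan of $\gGF(I)$ consisting of cones whose associated initial ideal contains no monomial). The key set-theoretic fact is that $|\gT(I)| = |\Wc_n^m|$, which is precisely the content of \cite[Corollary 8.4]{TK} recalled in Section~2 for a graded ideal of dimension $m > 0$.

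Having verified these two hypotheses, I would simply invoke Proposition \ref{almeida} with $\Fc = \gT(I)$: for every relatively open cone $\mathring{C}$ of $\gT(I)$ there exists a relatively open cone $\mathring{D} = \mathring{C}_A$ of $\Wc_n^m$ with $\mathring{C} \subset \mathring{D}$, which is exactly the statement. The only thing I would pause to mention, to make the chain of reasoning transparent, is that by Proposition \ref{sanogo} it would in fact suffice to verify the inclusion on maximal cones of $\gT(I)$, and these are all $m$-dimensional by Proposition \ref{baumann} (equivalently, the preceding corollary stating that $\gT(I)$ is a pure $m$-dimensional fan); but since Proposition \ref{almeida} already gives the statement for all relatively open cones, no additional argument is needed at this stage. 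Hence the corollary follows immediately.
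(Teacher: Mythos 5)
Your proposal is correct and follows the paper's own argument exactly: verify that $m<n$ and $|\gT(I)|=|\Wc_n^m|$ (by \cite[Corollary 8.4]{TK}), then apply Proposition \ref{almeida} with $\Fc=\gT(I)$, noting via Proposition \ref{baumann} that $\gT(I)$ is pure $m$-dimensional. No gaps.
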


\begin{proof}
Since $\left|\gT(I)\right|=\left|\Wc_n^m\right|$ by \cite[Corollary 8.4]{TK} and $\Wc_n^m$ is pure $m$-dimensional, it follows from Proposition \ref{baumann} that $\gT(I)$ is a pure $m$-dimensional fan. The claim now is a consequence from Proposition \ref{almeida}.
\end{proof}

\section{Generic Tropical Varieties of Cohen-Macaulay and Almost-Cohen-Macaulay Ideals}\label{fritz}

In addition to the dimension of an ideal it is also possible to recover information on the depth of the ideal from the generic tropical variety. We will show that for an $m$-dimensional ideal $I$ with $\depth(I)>0$ the generic tropical variety is $\Wc_n^m$ as a fan if and only if $\depth(I)=\dim I$ or $\depth(I)=\dim I-1$. Thus we can read off whether $I$ is Cohen-Macaulay or almost-Cohen-Macaulay from the fan $\gT(I)$.

To define the depth of $I$ we denote the coordinate ring $K[x_1,\ldots,x_n]/I$ by $R_I$. Recall that a system of linear forms $l_1,\ldots,l_t\in R_I$ is called a \emph{ regular sequence for $R_I$} if $l_i$ is not a zero-divisor on $R_I/(l_1,\ldots,l_{i-1})$ for $i=1,\ldots,t$.

\begin{defn}
For a graded ideal $I\subset K[x_1,\ldots,x_n]$ we define the \emph{depth of $I$} to be $$\depth(I)=\max\left\{t\in \N_0: \text{ there exists a regular sequence of linear forms } l_1,\ldots,l_t\in R_I \right\}.$$
\end{defn}

The depth is bounded from above by the dimension of the ideal (see for example \cite[Proposition 1.2.12]{BRHE}). Moreover, we know that $\depth(I)\geq\depth(\gin_{\succ}(I))$ for any term order $\succ$. Equality holds if $\succ$ is a graded reverse lexicographic order. These two statements follow from \cite[Corollary 3.5 and Remark 3.6]{BRCO} together with the Auslander-Buchsbaum formula.

In general it is not possible to see the depth of $I$ in the fan $T(I)$ as the following example shows.

\begin{ex}
For $1\leq k\leq n$ consider the ideal $I=(x_1(x_1+x_2),x_2(x_1+x_2),\ldots,x_k(x_1+x_2))\subset K[x_1,\ldots,x_n]$. Then $\dim I=n-1$ and $\depth(I)=n-k$. But the tropical variety $T(I)$ always consists of only one cone $T(I)=\left\{\omega \in\R^n: \omega_1=\omega_2\right\}$ which is independent from $k$. So we have obtained a collection of ideals of every possible depth from $0$ to $n-1$ such that the tropical variety is always the same.
\end{ex}

The connection of $\depth(I)$ with $\gT(I)$ is established by the following proposition taken as a reformulation from \cite[Lemma 3.1]{HESR} and relying on \cite{ELKE}. Since a generic initial ideal $J$ is a monomial ideal, there exists a system of monomial generators of $J$. The unique smallest system of monomial generators with respect to inclusion will be called a \emph{minimal system of generators} and its elements are \emph{minimal generators} of $J$.

\begin{prop}\label{timsmail}
Let $I\subset K[x_1,\ldots,x_n]$ be a graded ideal with $\dim(I)=m$ and $\succ$ be any term order with $x_1\succ\ldots\succ x_n$. Let $\depth(\gin_{\succ}(I))=t$. Then:
\begin{enumerate}
\item Every minimal generator of $\gin_{\succ}(I)$ is divisible by one of $x_1,\ldots,x_{n-m}$.
\item $x_{n-m}^d$ is one of the minimal generators of $\gin_{\succ}(I)$ for some $d\in \N$.
\item The minimal generators of $\gin_{\succ}(I)$ are elements of $K[x_1,\ldots,x_{n-t}]$.
\item There exists a minimal generator of $\gin_{\succ}(I)$ which is divisible by $x_{n-t}$.
\end{enumerate}
In particular, if $\succ$ is the reverse lexicographic order, these statements are true for $t=\depth(I)$, since then $\depth(I)=\depth(\gin_{\succ}(I))$.
\end{prop}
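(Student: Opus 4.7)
The plan is to reduce every statement to the structure theory of strongly stable monomial ideals, since the hypothesis $x_1\succ\cdots\succ x_n$ together with $\charac K=0$ forces $J:=\gin_\succ(I)$ to be Borel-fixed, hence strongly stable. Dimension and depth of $J$ are then read off directly from the minimal generating set $G(J)$ via classical results, and (i)-(iv) are exactly these readings. Recall also $\dim(\gin_\succ(I))=\dim(I)=m$, since taking initial ideals preserves the Hilbert function and hence dimension.

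For (i) and (ii), I would appeal to the fact that every associated prime of a Borel-fixed monomial ideal has the form $(x_1,\ldots,x_k)$. The largest-dimensional minimal prime therefore corresponds to the smallest such $k$, which must be $n-m$. Since $J\subset(x_1,\ldots,x_{n-m})$, every element of $G(J)$ is divisible by some $x_i$ with $i\le n-m$, establishing (i). Moreover $J\not\subset(x_1,\ldots,x_{n-m-1})$ (otherwise $\dim(S/J)\ge m+1$), so some $u\in G(J)$ lies in $K[x_1,\ldots,x_{n-m}]$ and is divisible by $x_{n-m}$. Strong stability now lets me repeatedly trade any factor $x_i$ with $i<n-m$ for $x_{n-m}$ while staying in $J$; iterating produces a pure power $x_{n-m}^d\in J$. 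Choosing $d$ minimal with $x_{n-m}^d\in J$ makes this pure power a minimal generator, which gives (ii).

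For (iii) and (iv), the workhorse is the Eliahou-Kervaire resolution (the paper's reference [ELKE]), which for a stable ideal computes
\[
\projdim(S/J)=\max\{m(u):u\in G(J)\},
\]
where $m(u)$ denotes the largest index $i$ with $x_i\mid u$. Combined with Auslander-Buchsbaum, $\depth(S/J)+\projdim(S/J)=n$, so the hypothesis $\depth(\gin_\succ(I))=t$ translates into $\max\{m(u):u\in G(J)\}=n-t$. The inequality $\le$ is precisely (iii), namely $G(J)\subset K[x_1,\ldots,x_{n-t}]$, and the attainment of the maximum is precisely (iv), namely that some minimal generator is divisible by $x_{n-t}$. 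The ``in particular'' addendum is then automatic from the already quoted fact that $\depth(I)=\depth(\gin_\succ(I))$ when $\succ$ is the reverse lexicographic order.

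Essentially no step here carries genuine difficulty: each clause is a direct translation of a standard combinatorial fact about Borel-fixed or stable monomial ideals into the language of the proposition, which is why the authors can treat it as a reformulation of [HESR, Lemma 3.1]. If there is a subtle point to be careful about, it is the small verification in (ii) that strong stability really permits the variable-swapping argument producing $x_{n-m}^d\in J$ from a generator involving $x_{n-m}$, but this is the defining property of strong stability applied inductively to the factors of $u$.
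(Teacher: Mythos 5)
The paper offers no proof of this proposition at all: it is presented as a reformulation of Lemma 3.1 of Herzog--Srinivasan together with the Eliahou--Kervaire resolution, so your job was to reconstruct the standard argument behind that citation, and you have done so correctly in outline. Parts (i), (iii), (iv) and the addendum are exactly right: Borel-fixedness in characteristic $0$ gives strong stability, the associated primes of a Borel-fixed ideal are of the form $(x_1,\ldots,x_k)$ so the unique minimal prime is $(x_1,\ldots,x_{n-m})$, and the Eliahou--Kervaire formula $\projdim(S/J)=\max\{m(u):u\in G(J)\}$ plus Auslander--Buchsbaum converts $\depth=t$ into $\max m(u)=n-t$, which is (iii) and (iv) verbatim.

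The one place where your write-up as stated would not go through is the variable-swapping in (ii), and you have the direction of the swap reversed. From $J\not\subset(x_1,\ldots,x_{n-m-1})$ you get a minimal generator $u$ divisible by \emph{none} of $x_1,\ldots,x_{n-m-1}$, hence $u\in K[x_{n-m},\ldots,x_n]$ (not $K[x_1,\ldots,x_{n-m}]$ as you wrote), and by (i) $x_{n-m}$ divides $u$. Strong stability only permits replacing a variable $x_i$ dividing a monomial of $J$ by an \emph{earlier} variable $x_j$ with $j<i$; so the legal move is to trade the factors $x_i$ of $u$ with $i>n-m$ for $x_{n-m}$, not the factors with $i<n-m$ (that trade goes the forbidden way, and indeed such factors do not occur in $u$). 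Iterating the legal trade yields $x_{n-m}^{\deg u}\in J$, and taking $d$ minimal with $x_{n-m}^d\in J$ gives the pure-power minimal generator. With that correction the argument is complete and is the standard proof underlying the paper's citation.
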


Recall that by \cite[Corollary 8.4]{TK} the condition for $\omega\in\R^n$ to be in $\gT(I)$ is that the minimum of its coordinates has to be attained at least $n-m+1$ times. So Proposition \ref{timsmail} already indicates that the cases $\depth(I)=m$ and $\depth(I)=m-1$ are special. We use the following standard definition.

\begin{defn}
Let $I\subset K[x_1,\ldots,x_n]$ be a graded ideal. If $\depth(I)=\dim(I)$, then $I$ will be called \emph{Cohen-Macaulay}. If $\depth(I)=\dim(I)-1$, then $I$ is called \emph{almost-Cohen-Macaulay}.
\end{defn}

In this case the refinement $\succ_{\omega}$ of every $\omega\in \gT(I)$ with respect to an appropriate reverse lexicographic order $\succ$ yields the same generic initial ideal as with respect to $\succ$.

\begin{lemma}\label{marin}
Let $I\subset K[x_1,\ldots,x_n]$ be a graded Cohen-Macaulay or almost-Cohen-Macau\-lay ideal, $\succ$ be the degree reverse lexicographic order with $x_1\succ x_2\succ\ldots\succ x_n$ and $\omega\in \Wc_n^m\subset \R^n$ with $\omega_1=\omega_2=\ldots=\omega_{n-m+1}\leq \omega_{n-m+2},\ldots,\omega_n$. Moreover, let $\succ_{\omega}$ be the refinement of $\omega$ with respect to $\succ$. Then the reduced Gröbner bases of $g(I)$ with respect to $\succ$ and $\succ_{\omega}$ are the same for $g\in U$. In particular, $\gin_{\succ_{\omega}}(I)=\gin_{\succ}(I)$.
\end{lemma}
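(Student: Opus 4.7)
The plan is to prove that the reduced Gr\"obner basis of $g(I)$ with respect to $\succ$ (for $g\in U$) is already a reduced Gr\"obner basis with respect to $\succ_\omega$, by combining the structural description of $\gin_\succ(I)$ from Proposition \ref{timsmail} with the special shape of $\omega$. Write $G=\{g_1,\ldots,g_r\}$ for the reduced Gr\"obner basis of $g(I)$ with respect to $\succ$ and let $x^{\beta_i}=\inom_\succ(g_i)$ with $d_i=|\beta_i|$. Since $\succ$ is degree reverse lexicographic we have $\depth(\gin_\succ(I))=\depth(I)\geq m-1$ by the (almost-)Cohen--Macaulay hypothesis, so Proposition \ref{timsmail}(iii) applied to $\gin_\succ(I)$ forces each $\beta_i$ to have support contained in $\{1,\ldots,n-m+1\}$.

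The heart of the argument is the pointwise identity $\inom_{\succ_\omega}(g_i)=\inom_\succ(g_i)$. Because $g_i$ is homogeneous, every term $x^\gamma$ of $g_i$ satisfies $|\gamma|=d_i$ and hence $\wt_\omega(x^\gamma)=\sum_j\omega_j\gamma_j\geq\omega_1 d_i$, with equality attained by $x^{\beta_i}$ since its support lies in the indices where $\omega_j=\omega_1$. Therefore $x^{\beta_i}$ is a term of $\inom_\omega(g_i)$, and as the $\succ$-maximal term of $g_i$ it is also $\succ$-maximal within the subset $\inom_\omega(g_i)$, giving $\inom_{\succ_\omega}(g_i)=\inom_\succ(\inom_\omega(g_i))=x^{\beta_i}$.

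To pass from leading terms of individual $g_i$'s to initial ideals, I would use a Hilbert function argument. The ideal $J:=(\inom_{\succ_\omega}(g_i):i=1,\ldots,r)=(\inom_\succ(g_i):i)=\inom_\succ(g(I))$ is contained in $\inom_{\succ_\omega}(g(I))$, and both have the same Hilbert function as $g(I)$, so $J=\inom_{\succ_\omega}(g(I))$ and $G$ is a Gr\"obner basis of $g(I)$ with respect to $\succ_\omega$. Since the leading monomials of $G$ under $\succ$ and $\succ_\omega$ coincide, the reducedness conditions are identical, so $G$ is even the reduced Gr\"obner basis with respect to $\succ_\omega$; shrinking $U$ if necessary to a nonempty Zariski-open subset on which additionally $\inom_{\succ_\omega}(g(I))=\gin_{\succ_\omega}(I)$, we conclude $\gin_{\succ_\omega}(I)=\inom_{\succ_\omega}(g(I))=\inom_\succ(g(I))=\gin_\succ(I)$. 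The only delicate point is correctly handling the paper's convention that $\succ_\omega$ assigns the ``larger'' rank to terms of smaller $\omega$-weight, so that $\inom_{\succ_\omega}(f)=\inom_\succ(\inom_\omega(f))$; once this is unpacked the argument is a direct consequence of Proposition \ref{timsmail}.
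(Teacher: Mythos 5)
Your proof is correct and follows essentially the same route as the paper's: both use Proposition \ref{timsmail} together with $\depth(I)\geq m-1$ to place the $\succ$-leading terms of the reduced Gr\"obner basis in $K[x_1,\ldots,x_{n-m+1}]$, and then observe that these terms have minimal $\omega$-weight so that the $\succ$- and $\succ_\omega$-leading terms of each basis element coincide. The only (harmless) divergence is the final step, where you conclude via the Hilbert-function comparison that the common leading terms already generate $\inom_{\succ_\omega}(g(I))$, whereas the paper invokes the S-pair criterion.
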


\begin{proof}
Since for a given degree $t$ any term containing none of $x_{n-m+2},\ldots,x_n$ is smaller than any term divisible by one of them with respect to $\succ_{\omega}$, the term orders $\succ$ and $\succ_{\omega}$ coincide up to the term $x_{n-m+1}^t$. By Proposition \ref{timsmail} the minimal generators of $\gin_{\succ}(I)$ are monomials in $K[x_1,\ldots,x_{n-m+1}]$. Then for $g\in U$ the leading terms of the reduced Gröbner basis $\Gc(g)$ of $g(I)$ are terms in $K[x_1,\ldots,x_{n-m+1}]$. Since the leading terms of two elements of $\Gc(g)$ are the same with respect to $\succ$ and $\omega_{\succ}$ every S-pair with respect to $\succ_{\omega}$ is the same as with respect to $\succ$. As $\Gc(I)$ is a Gröbner basis with respect to $\succ$, every such S-pair reduces to $0$. So the set $\Gc(g)$ is a Gröbner basis with respect to $\succ_{\omega}$ as well. Hence, $\gin_{\succ_{\omega}}(I)=\gin_{\succ}(I)$.
\end{proof}

We can now formulate the reverse statement of Proposition \ref{almeida} for Cohen-Macaulay and almost-Cohen-Macaulay ideals.

\begin{prop}\label{naldo}
Let $I\subset K[x_1,\ldots,x_n]$ be a graded Cohen-Macaulay or almost-Cohen-Macaulay ideal with $\dim I=m$. Then for every relatively open cone $\mathring{C}_A\subset \Wc_n^m$ there exists a relatively open cone $\mathring{C}$ of $\gT(I)$ with $\mathring{C}_A\subset \mathring{C}$.
\end{prop}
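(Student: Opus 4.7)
The plan is to invoke Proposition \ref{sanogo} in order to reduce the claim to maximal cones $C_A\subset \Wc_n^m$, i.e.\ those with $|A|=n-m+1$. For such a cone the goal will be to show that the initial ideal $\inom_\omega(g(I))$, for fixed $g\in U$, does not depend on the choice of $\omega\in\mathring{C}_A$. Once this constancy is in place, $\mathring{C}_A$ is forced to lie inside a single relatively open cone $\mathring{C}$ of $\gGF(I)$, and because $\mathring{C}_A\subset|\Wc_n^m|=|\gT(I)|$ this cone belongs to $\gT(I)$, giving the required containment $\mathring{C}_A\subset\mathring{C}$.

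To arrange a concrete setup for a given maximal $C_A$, I would first argue that $\gT(I)$ is invariant under permutation of coordinates: for a permutation matrix $\sigma\in \GL_n(K)$ one has $T(\sigma(J))=\sigma(T(J))$, and $U\cap \sigma^{-1}U$ is a non-empty Zariski-open subset of $\GL_n(K)$, so $\sigma(\gT(I))=\gT(I)$. Thus without loss of generality $A=\{1,\ldots,n-m+1\}$. I would then take $\succ$ to be the degree reverse lexicographic order with $x_1\succ\ldots\succ x_n$, which is exactly the setting of Lemma \ref{marin}. For $g\in U$, let $\Gc(g)$ be the reduced Gr\"obner basis of $g(I)$ with respect to $\succ$; Lemma \ref{marin} asserts that $\Gc(g)$ is simultaneously the reduced Gr\"obner basis with respect to $\succ_\omega$ for every $\omega\in\mathring{C}_A$. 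By the standard fact that a Gr\"obner basis with respect to a weight refinement passes to initial forms, $\inom_\omega(g(I))$ is generated by $\{\inom_\omega(f):f\in\Gc(g)\}$.

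The heart of the argument is to compute $\inom_\omega(f)$ for $f\in\Gc(g)$. Since $I$ is Cohen-Macaulay or almost-Cohen-Macaulay, $\depth(I)\in\{m-1,m\}$, so Proposition \ref{timsmail} forces every leading term $\inom_\succ(f)$ to be a monomial in $K[x_1,\ldots,x_{n-m+1}]$. Combining this with the homogeneity of $f$ and the strict inequalities $\omega_1=\ldots=\omega_{n-m+1}<\omega_i$ valid for $i\geq n-m+2$ throughout $\mathring{C}_A$, a short weight computation identifies $\inom_\omega(f)$ as the sum of exactly those terms of $f$ that lie in $K[x_1,\ldots,x_{n-m+1}]$; indeed any such monomial of total degree $d$ has $\omega$-weight $\omega_1 d$, while every other monomial of degree $d$ has strictly larger $\omega$-weight. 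This depends on $f$ alone, not on the specific $\omega\in\mathring{C}_A$, so $\inom_\omega(g(I))$ is constant on $\mathring{C}_A$ as desired.

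The main obstacle I anticipate is the bookkeeping around permutations of coordinates together with the two interacting roles of $\omega$ and $\succ$ in Lemma \ref{marin}: one has to make sure that the permutation bringing $A$ into standard position is compatible with the reverse lexicographic order, that the choice of $g\in U$ can be made uniform across the finitely many orderings one encounters, and that the minimum-versus-maximum conventions for $\omega$-weight and $\succ_\omega$ are kept straight. Once those small checks are settled, the rest of the proof is just the weight calculation above together with the reduction via Proposition \ref{sanogo}.
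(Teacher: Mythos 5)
Your proposal is correct and follows essentially the same route as the paper's: Lemma \ref{marin} supplies that the reduced Gr\"obner basis of $g(I)$ with respect to $\succ$ remains one for every refinement $\succ_{\omega}$ with $\omega\in\mathring{C}_A$, and the weight computation shows the initial forms of its elements --- hence $\inom_{\omega}(g(I))$ --- are constant on $\mathring{C}_A$, so $\mathring{C}_A$ sits inside a single relatively open, monomial-free cone of $\gGF(I)$. The only cosmetic differences are that the paper treats all cones $C_A$ with $\left|A\right|\geq n-m+1$ directly instead of reducing to maximal ones via Proposition \ref{sanogo}, and it dispatches the permutation bookkeeping with a bare ``without loss of generality'' where you spell it out.
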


\begin{proof}
Let $A\subset \left\{1,\ldots,n\right\}$ with $\left|A\right|\geq n-m+1$ so $\mathring{C}_A$ is an open cone of $\Wc_n^m$. We need to show that for $\omega,\omega'\in \mathring{C}_A$ we have $\inom_{\omega}(g(I))=\inom_{\omega'}(g(I))$ for every $g\in U$. Without loss of generality we may assume $\left\{1,\ldots,n-m+1\right\}\subset A$. Let $\succ$ denote the degree reverse lexicographical order with $x_1\succ\ldots\succ x_n$. By Lemma \ref{marin} we know the reduced Gröbner basis $\Gc(g)=\left\{h_1(g),\ldots,h_s(g)\right\}$ of $g(I)$ with respect to $\succ$ is also a reduced Gröbner basis with respect to $\succ_{\omega}$ and $\succ_{\omega'}$ for $g\in U$. So $\left\{\inom_{\omega}(h_1(g)),\ldots,\inom_{\omega}(h_s(g))\right\}$ and $\left\{\inom_{\omega'}(h_1(g)),\ldots,\inom_{\omega'}(h_s(g))\right\}$ are Gröbner bases of $\inom_{\omega}(g(I))$ and $\inom_{\omega'}(g(I))$ respectively. However, all the leading terms of the $h_i(g)$ are elements of $K[x_1,\ldots,x_{n-m+1}]$. Hence, $\inom_{\omega}(h_i(g))$ and $\inom_{\omega'}(h_i(g))$ exactly consist of those terms of $h_i(g)$ which contain only variables $x_j$ for which $\omega_j$ and $\omega'_j$ respectively are minimal. But these variables are the same for $\omega$ and $\omega'$ by assumption, so we obtain $\inom_{\omega}(g(I))=\inom_{\omega'}(g(I))$. This shows that all $\omega\in \mathring{C}_A$ are contained in the same open cone $\mathring{C}$ of $T(g(I))=\gT(I)$ for $g\in U$.
\end{proof}

In the case of an $m$-dimensional Cohen-Macaulay or almost-Cohen-Macaulay ideal the generic tropical variety is equal to $\Wc_n^m$ as a fan. This generalizes the result \cite[Corollary 8.4]{TK} for this class of ideals.

\begin{cor}\label{mertesacker}
Let $I\subset K[x_1,\ldots,x_n]$ be a graded Cohen-Macaulay or almost-Cohen-Macaulay ideal with $\dim I=m$. Then $\gT(I)=\Wc_n^m$ as a fan.
\end{cor}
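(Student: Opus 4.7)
The plan is to combine Corollary \ref{rosenberg} and Proposition \ref{naldo}, which together give mutual containment of relatively open cones in both directions; the only thing that needs care is turning these two one-sided containments into an actual equality of fans.

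First I would recall that two fans in $\R^n$ coincide exactly when their collections of relatively open cones agree, so it suffices to match the relatively open cones of $\gT(I)$ with those of $\Wc_n^m$. By hypothesis $I$ is Cohen-Macaulay or almost-Cohen-Macaulay, so Proposition \ref{naldo} applies: for every relatively open cone $\mathring{C}_A$ of $\Wc_n^m$ there is a relatively open cone $\mathring{C}$ of $\gT(I)$ with $\mathring{C}_A\subset\mathring{C}$. On the other hand, Corollary \ref{rosenberg} (which needs no Cohen-Macaulay hypothesis) provides, for that same cone $\mathring{C}$ of $\gT(I)$, a relatively open cone $\mathring{C}_{A'}$ of $\Wc_n^m$ with $\mathring{C}\subset\mathring{C}_{A'}$.

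Chaining these inclusions gives $\mathring{C}_A\subset\mathring{C}\subset\mathring{C}_{A'}$. Since the relatively open cones of $\Wc_n^m$ are pairwise disjoint (they form a partition of $|\Wc_n^m|$), the inclusion $\mathring{C}_A\subset\mathring{C}_{A'}$ forces $A=A'$, and hence $\mathring{C}_A=\mathring{C}=\mathring{C}_{A'}$. Thus every relatively open cone of $\Wc_n^m$ is a relatively open cone of $\gT(I)$, and by the same reasoning (using Corollary \ref{rosenberg} first and then Proposition \ref{naldo}) every relatively open cone of $\gT(I)$ is a relatively open cone of $\Wc_n^m$. Therefore the two fans have the same relatively open cones, so $\gT(I)=\Wc_n^m$ as fans.

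I do not expect any serious obstacle: the hard analytic work has already been done in Proposition \ref{naldo} (which relies on Lemma \ref{marin} and Proposition \ref{timsmail} to pin down the reduced Gröbner basis of $g(I)$ under a reverse lexicographic refinement), and in the structural results of Section 3. The only small point to be careful about is that one should not mix up closed and relatively open cones when chaining the inclusions; invoking the disjointness of the relatively open cones in the partition $\Wc_n^m=\dot\bigcup_{|A|\geq n-m+1}\mathring{C}_A$ makes the argument completely straightforward.
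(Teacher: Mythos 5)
Your proof is correct and is essentially the paper's argument: the paper runs the same sandwich $\mathring{C}\subset\mathring{D}\subset\mathring{E}$ starting from a cone of $\gT(I)$ and uses disjointness of the relatively open cones of $\gT(I)$, whereas you start from a cone of $\Wc_n^m$ and use disjointness of the relatively open cones of $\Wc_n^m$ — a symmetric variant of the identical idea, with both directions then handled by the same reasoning.
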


\begin{proof}
Let $\mathring{C}$ be a relatively open cone of $\gT(I)$. By Corollary \ref{rosenberg} there exists a cone $D$ of $\Wc_n^m$ such that $\mathring{C}\subset \mathring{D}$. On the other hand by Lemma \ref{naldo} there exists a cone $E$ of $\gT(I)$ with $\mathring{D}\subset \mathring{E}$. But then $\mathring{C}\subset \mathring{E}$ are two cones of $\gT(I)$ with $\mathring{C}\cap\mathring{E}\neq \emptyset$. This implies $\mathring{C}=\mathring{E}$ and thus $\mathring{C}=\mathring{D}$. This shows that every maximal cone of $\gT(I)$ is equal to some maximal cone of $\Wc_n^m$. By the same argument it follows that every maximal cone from $\Wc_n^m$ is equal to some maximal cone of $\gT(I)$, so the two fans are the same.
\end{proof}

To show that Corollary \ref{mertesacker} is wrong for every ideal that is not Cohen-Macaulay or almost-Cohen-Macaulay we need the following auxiliary result.

\begin{lemma}\label{proedl}
Let $c\in\N$ and $\omega\in\R^n$ be such that $0=\omega_1=\ldots=\omega_{n-m+1}$ and $c\omega_i<\omega_{i+1}$ for $i=n-m+1,\ldots,n-1$. Let $\succ$ be the reverse lexicographic order with $x_1\succ\ldots\succ x_n$. Then $\succ$ and $\succ_{\omega}$ are the same term orders for the monomials of any degree up to $c$.
\end{lemma}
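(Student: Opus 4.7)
The plan is to show directly that for any two distinct monomials $x^\alpha,x^\beta$ of the same total degree $d\leq c$, the comparisons under $\succ$ and $\succ_\omega$ agree. Setting $k=\max\{j:\alpha_j\neq\beta_j\}$ and assuming without loss of generality $\alpha_k<\beta_k$ (so that $x^\alpha\succ x^\beta$ under reverse lexicographic comparison), the task reduces to verifying the same relation under $\succ_\omega$, i.e.\ showing that the $\omega$-weight comparison either ties (with $\succ$ then breaking the tie) or goes in the same direction.

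I would split into two cases according to the position of $k$. If $k\leq n-m+1$, then $\omega_j=0$ for every $j\leq k$ and $\alpha_j=\beta_j$ for $j>k$, so $\omega\cdot\alpha=\omega\cdot\beta$; the tie is then broken by $\succ$ and agreement is automatic.

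The substantive case is $k>n-m+1$, where the hypothesis $c\omega_i<\omega_{i+1}$ for $i=n-m+1,\ldots,n-1$ forces $\omega_{n-m+2},\ldots,\omega_n$ to be positive and strictly increasing, so $\omega_j\leq\omega_{k-1}$ for every $j<k$. The key estimate is
\[
\sum_{j<k}\omega_j\alpha_j \;\leq\; \omega_{k-1}\sum_{j<k}\alpha_j \;\leq\; c\,\omega_{k-1} \;<\; \omega_k,
\]
where the second inequality uses the degree bound $\sum_j\alpha_j=d\leq c$ and the third is the hypothesis. Combined with $\beta_k\geq\alpha_k+1$ and $\omega_j\beta_j\geq 0$, this yields
\[
\omega\cdot\beta \;\geq\; \omega_k\beta_k \;\geq\; \omega_k\alpha_k+\omega_k \;>\; \omega_k\alpha_k+\sum_{j<k}\omega_j\alpha_j \;=\; \omega\cdot\alpha,
\]
so $\omega\cdot\alpha<\omega\cdot\beta$ and $\succ_\omega$ ranks $x^\alpha$ above $x^\beta$ in agreement with $\succ$.

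The main obstacle is making the bound on $\sum_{j<k}\omega_j\alpha_j$ strictly smaller than $\omega_k$, and this is exactly what the two hypotheses are calibrated to deliver: the degree bound $d\leq c$ keeps $\sum_j\alpha_j\leq c$, and the geometric growth $c\omega_{k-1}<\omega_k$ absorbs that factor of $c$. Loosening either ingredient opens a gap the argument cannot close; with both in place the estimate closes with zero slack.
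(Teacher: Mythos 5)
Your proof is correct and takes essentially the same route as the paper's: both pivot on the largest index $k$ where the exponent vectors differ, dispose of the case $k\leq n-m+1$ as a weight tie broken by $\succ$, and for $k>n-m+1$ use the calibration $c\omega_{k-1}<\omega_k$ together with the degree bound $d\leq c$ to force the $\omega$-weight comparison to agree with reverse lex. One cosmetic slip: the final equality $\omega_k\alpha_k+\sum_{j<k}\omega_j\alpha_j=\omega\cdot\alpha$ silently assumes $\alpha_j=0$ for $j>k$; since $\alpha_j=\beta_j$ for $j>k$, the common tail $\sum_{j>k}\omega_j\alpha_j$ cancels from both sides (equivalently, first divide out the common factor supported on $x_{k+1},\ldots,x_n$, as the paper does), so the argument is unaffected.
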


\begin{proof}
Let $t\leq c$ and $x^{\nu},x^{\mu}$ be two monomials of degree $t$. We write $x^{\nu}=y_1z_1$ and $x^{\mu}=y_2z_2$, where $y_1,y_2\in K[x_1,\ldots,x_{n-m+1}]$ and $z_1,z_2\in K[x_{n-m+2},\ldots,x_n]$. If $z_1=z_2$, it is clear from the definition that $x^{\nu}\succ x^{\mu}$ if and only if $x^{\nu} \succ_{\omega} x^{\mu}$. Otherwise let $k\geq n-m+2$ be the largest index such that $\nu_k\neq \mu_k$.
Without loss of generality we may assume that no variable $x_j$ divides $x^{\nu}$ or $x^{\mu}$ for $j>k$ and that $\nu_k<\mu_k$, so $x^{\nu}\succ x^{\mu}$. For the $\omega$-weight of $x^{\nu}$ and $x^{\mu}$ we obtain the upper bound $$\wt_{\omega}(x^{\nu})\leq \wt_{\omega}(x_{k-1}^{t-\nu_k}x_k^{\nu_k})=\omega_{k-1}(t-\nu_k)+\omega_k \nu_k$$ and the lower bound $$\wt_{\omega}(x^{\mu})\geq \wt_{\omega}(x_1^{t-\mu_k} x_k^{\mu_k})=\omega_k\mu_k.$$ So it is enough to show that $\omega_{k-1}(t-\nu_k)+\omega_k\nu_k<\omega_k\mu_k$. We have

\begin{eqnarray*}
c(\omega_k\mu_k-(\omega_{k-1}(t-\nu_k)+\omega_k\nu_k)) & = & c\omega_k(\mu_k-\nu_k)-c\omega_{k-1}(t-\nu_k)\\
                                                                                                       & > & c\omega_k(\mu_k-\nu_k)-\omega_k(t-\nu_k)\\
                                                                                                       & = & \omega_k(c(\mu_k-\nu_k)-(t-\nu_k))\\
                                                                                                             & \geq & 0.
\end{eqnarray*}
The last inequality is true, since $t-\nu_k\leq c$ and $c(\mu_k-\nu_k)>c$, as we know $\mu_k>\nu_k$. It follows that $\wt_{\omega}(x^{\nu})<\wt_{\omega}(x^{\mu})$, so $x^{\nu}\succ_{\omega} x^{\mu}$. Hence, $\succ$ and $\succ_{\omega}$ coincide up to degree $c$.
\end{proof}

We can now completely characterize, when $\gT(I)$ is equal to a skeleton of the generic tropical fan for ideals of $\depth(I)>0$. If $\dim(I)=0$,  we know that $\gT(I)$ is empty, since every graded ideal of dimension $0$ contains a monomial. In the cases $\dim(I)=1$ and $\dim(I)=2$ the fan $\gT(I)$ is equal to $\Wc_n^1$ and $\Wc_n^2$ respectively by \cite[Examples 8.6, 8.7]{TK}. Note that in these cases every ideal of $\depth(I)>0$ is Cohen-Macaulay or almost Cohen-Macaulay. For ideals with arbitrary dimension $\dim(I)>0$ we have the following.

\begin{thm}\label{tosic}
Let $I\subset K[x_1,\ldots,x_n]$ be a graded ideal with $\dim(I)=m>0$ and $\depth(I)>0$. Then $I$ is Cohen-Macaulay or almost-Cohen-Macaulay if and only if $\gT(I)=\Wc_n^m$ as a fan.
\end{thm}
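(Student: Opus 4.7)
The forward implication is Corollary~\ref{mertesacker}, so my plan focuses on the converse, which I prove in contrapositive form: assuming $\depth(I)=t$ with $0<t\le m-2$, I show $\gT(I)\ne\Wc_n^m$ as fans. Since $|\gT(I)|=|\Wc_n^m|$ and $\gT(I)$ refines $\Wc_n^m$ by Corollary~\ref{rosenberg}, it is enough to produce a single maximal cone of $\Wc_n^m$ that is strictly subdivided by $\gT(I)$. I will take $A=\{1,\ldots,n-m+1\}$ and exhibit two weights $\omega,\omega'\in\mathring{C}_A$ with $\inom_\omega(g(I))\ne\inom_{\omega'}(g(I))$ for $g\in U$.

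The core construction compares two degree reverse lexicographic orders: the standard one $\succ$ with $x_1\succ\ldots\succ x_n$, and the order $\succ'$ with $x_1\succ'\ldots\succ'x_{n-t-1}\succ'x_{n-t+1}\succ'x_{n-t}\succ'x_{n-t+2}\succ'\ldots\succ'x_n$, obtained from $\succ$ by swapping $x_{n-t}$ and $x_{n-t+1}$. This swap is nontrivial precisely because $t\le m-2$ forces $n-t\ge n-m+2$, so that both $x_{n-t}$ and $x_{n-t+1}$ lie among the last $m-1$ variables. Applying Proposition~\ref{timsmail}(iii) and~(iv) to each order, $\gin_\succ(I)$ has a minimal generator $M=x_{n-t}m''$ with $m''\in K[x_1,\ldots,x_{n-t}]$, while every minimal generator of $\gin_{\succ'}(I)$ lies in $K[x_1,\ldots,x_{n-t-1},x_{n-t+1}]$ and is therefore not divisible by $x_{n-t}$. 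Since the minimal monomial generating set of a monomial ideal is unique, the equality $\gin_\succ(I)=\gin_{\succ'}(I)$ would force $M\in K[x_1,\ldots,x_{n-t-1},x_{n-t+1}]$, contradicting $x_{n-t}\mid M$; hence $\gin_\succ(I)\ne\gin_{\succ'}(I)$.

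Next, let $c$ exceed the maximum degree of any element in the reduced Gr\"obner bases of $g(I)$ with respect to $\succ$ and $\succ'$ (for $g\in U$), choose $\omega\in\mathring{C}_A$ satisfying $0=\omega_1=\ldots=\omega_{n-m+1}$ and $c\omega_i<\omega_{i+1}$ for $i=n-m+1,\ldots,n-1$, and let $\omega'$ be obtained from $\omega$ by swapping the entries at positions $n-t$ and $n-t+1$; clearly $\omega'\in\mathring{C}_A$ too. Lemma~\ref{proedl} gives that $\succ$ and $\succ_\omega$ agree on monomials of degree up to $c$, and applied after relabeling variables according to the swap yields that $\succ'$ and $\succ'_{\omega'}$ agree on such monomials. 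A short additional check shows that $\succ_{\omega'}$ also coincides with $\succ'_{\omega'}$ in low degree: the fast-growing condition forces any two monomials of equal $\omega'$-weight to have identical exponents at every position $\ge n-m+2$, so the swap of $x_{n-t}$ and $x_{n-t+1}$ in the tie-breaker is irrelevant. These agreements let the reduced Gr\"obner basis of $g(I)$ with respect to $\succ$ (respectively $\succ'$) double as the reduced Gr\"obner basis with respect to $\succ_\omega$ (respectively $\succ_{\omega'}$), yielding $\inom_{\succ_\omega}(g(I))=\gin_\succ(I)$ and $\inom_{\succ_{\omega'}}(g(I))=\gin_{\succ'}(I)$.

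Suppose, toward a contradiction, that $\inom_\omega(g(I))=\inom_{\omega'}(g(I))=:J$. Applying the identity $\inom_{\succ_\nu}(g(I))=\inom_\succ(\inom_\nu(g(I)))$ with $\nu=\omega$ and $\nu=\omega'$, both sides evaluate to $\inom_\succ(J)$, so $\gin_\succ(I)=\inom_\succ(J)=\gin_{\succ'}(I)$, contradicting the second paragraph. Hence $\omega$ and $\omega'$ lie in distinct open cones of $\gT(I)$ while both remaining in $\mathring{C}_A$, so $C_A$ is not a cone of $\gT(I)$ and $\gT(I)\ne\Wc_n^m$ as fans. I expect the main obstacle to be the bookkeeping in the third paragraph, specifically the verification that $\succ_{\omega'}$ (whose tie-breaker is the original $\succ$ rather than the permuted $\succ'$) still produces $\gin_{\succ'}(I)$ as its generic initial ideal; this is where the fast-growing hypothesis on $\omega'$ is used in an essential way.
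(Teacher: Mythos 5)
Your proposal is correct and follows essentially the same route as the paper's proof: the same pair of reverse lexicographic orders differing by the swap of $x_{n-t}$ and $x_{n-t+1}$, the same appeal to Proposition~\ref{timsmail} to distinguish $\gin_{\succ}(I)$ from $\gin_{\succ'}(I)$, and the same fast-growing weight vectors handled via Lemma~\ref{proedl}. Your explicit verification that the tie-breaker in $\succ_{\omega'}$ versus $\succ'_{\omega'}$ is irrelevant in low degree is a point the paper passes over with ``the same argument as before,'' but it is the same argument in substance.
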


\begin{proof}
We show that if $\depth(I)<m-1$, then $\gT(I)\neq\Wc_n^m$ as a fan. For this let $\succ$ be the reverse lexicographical order with $x_1\succ\ldots\succ x_{n-t}\succ x_{n-t+1}\succ\ldots\succ x_n$ and $\succ'$ be the reverse lexicographical order with $x_1\succ'\ldots\succ' x_{n-t+1}\succ' x_{n-t}\succ'\ldots\succ' x_n$. Let $c$ be the maximal degree of the minimal generators of $\gin_{\succ}(I)$ and $\gin_{\succ'}(I)$. For a moment for $a,b\in\R_+$ we write $a\ll b$ if $ac<b$.

Choose $\omega,v\in\R^n$ such that $$0=\omega_1=\ldots=\omega_{n-m+1}\ll\omega_{n-m+2}\ll\ldots\ll\omega_n$$ and $$0=v_1=\ldots=v_{n-m+1}\ll v_{n-m+2}\ll\ldots\ll v_{n-t+1}\ll v_{n-t}\ll\ldots\ll v_n.$$ By Lemma \ref{proedl} we know that $\succ$ and $\succ_{\omega}$ are the same term orders up to degree $c$. Since $\gin_{\succ}(I)$ is generated by monomials of degree at most $c$, for a fixed $g\in U$ all elements of the reduced Gröbner basis $\Gc$ of $g(I)$ with respect to $\succ$ have degree at most $c$. The leading term of every element of $\Gc$ is the same with respect to $\succ$ and $\succ_{\omega}$. Thus every $S$-pair of elements of $\Gc$ reduces to zero with respect to $\succ_{\omega}$ as well. So $\Gc$ is also a Gröbner basis of $\gin_{\succ_{\omega}}(I)$. This implies $\gin_{\succ_{\omega}}(I)=\gin_{\succ}(I)$.

Again by Lemma \ref{proedl} and the same argument as before we can show that $\gin_{\succ_{v}}(I)=\gin_{\succ'}(I)$. Since $\depth(\gin_{\succ}(I))=\depth(\gin_{\succ'}(I))=t$, we know that $x_{n-t}$ divides one of the minimal generators of $\gin_{\succ}(I)$ but $x_{n-t}$ does not divide one of the minimal generators of $\gin_{\succ'}(I)$ by Proposition \ref{timsmail}. So $\gin_{\succ_{\omega}}(I)=\gin_{\succ}(I)\neq\gin_{\succ'}(I)=\gin_{\succ_v}(I)$, and it follows that $\inom_{\omega}(g(I))\neq\inom_{v}(g(I))$ for $g\in U$. Hence, $\omega$ and $v$ are not in the same relatively open cone of $\gT(I)$, but they are in the same relatively open cone of $\Wc_n^m$. This implies $\gT(I)\neq \Wc_n^m$ as a fan.

The converse of this statement, i.e. that $\depth(I)\geq m-1$ implies $\gT(I)=\Wc_n^m$ as a fan, has already been proved in Corollary \ref{mertesacker}.
\end{proof}

In particular, this theorem gives a negative answer to the question posed in the introduction of \cite{TK} whether the generic tropical variety as a fan only depends on the dimension of the ideal.

\section{Generic Tropical Varieties and Depth}

In this section we will consider a certain class of ideals $I$ with $\dim I-1>\depth(I)>0$ for which we can obtain the depth from the generic tropical varieties. These ideals have the property that of all their generic initial ideals have the same $\depth$ as $I$ itself. This makes it possible to use Proposition \ref{timsmail} on all of these. We express this property by considering the \emph{generic depth} of $I$.

\begin{defn}
For a graded ideal $I\subset K[x_1,\ldots,x_n]$ we call $$\gdepth(I)=\min\left\{\depth(\gin_{\succ}(I))\right\},$$ where the minimum is taken over all possible generic initial ideals of $I$, the \emph{generic depth of $I$}. If $\depth(I)=\gdepth(I)$, then $I$ is called a \emph{maximal-$\gdepth$} ideal.
\end{defn}

Note that since $\depth(\gin_{\succ}(I))\leq\depth(I)$ for any generic initial ideal of $I$, the ideal $I$ is a maximal-$\gdepth$ ideal if and only if $\depth(I)=\depth(\gin_{\succ}(I))$ for every generic initial ideal of $I$. We firstly describe two interesting classes of maximal-$\gdepth$ ideals.

\begin{ex}\label{harnik}
The first example is the class of strongly stable ideals. Recall that a monomial ideal $I\subset K[x_1,\ldots,x_n]$ is called strongly stable with respect to some ordering $x_{i_1}>\ldots> x_{i_n}$ of the variables $x_1,\ldots,x_n$ if for every monomial $u\in I$ we also have $x_{i_j} u x^{-1}_{i_k}\in I$ for every $x_{i_k}$ which divides $u$ and every $j<k$. Every ideal $\gin_{\succ}(I)$ is a strongly stable ideal with respect to the ordering of the variables given by $\succ$, since $\charac(K)=0$. Moreover, if $I$ is strongly stable with respect to $x_{i_1}>\ldots> x_{i_n}$ and $\succ$ is a term order with $x_{i_1}\succ\ldots\succ x_{i_n}$, then $\gin_{\succ}(I)=I$. We now explain that strongly stable ideals are maximal-$\gdepth$ ideals.

Let $I\subset K[x_1,\ldots,x_n]$ be a graded strongly stable ideal with respect to $x_1>\ldots>x_n$. Let $\succ$ be any term order with $x_{i_1}\succ\ldots\succ x_{i_n}$ for $\left\{i_1,\ldots,i_n\right\}=\left\{1,\ldots,n\right\}$. In addition, let $\succ'$ be the reverse lexicographic order with $x_{i_1}\succ'\ldots\succ' x_{i_n}$. Consider the image of $I$ under the $K$-algebra isomorphism $\phi$ that maps $x_j$ to $x_{i_j}$. Then $\phi(I)$ is a strongly stable ideal with respect to term orders with $x_{i_1}\succ\ldots\succ x_{i_n}$, so in particular with respect to $\succ$ and to $\succ'$. So we know that $\gin_{\succ}(\phi(I))=\gin_{\succ'}(\phi(I))=\phi(I)$. Let $\emptyset\neq U_1\subset \GL_n(K)$ be Zariski-open such that $\inom_{\succ}(g(I))=\gin_{\succ}(I)$ for every $g\in U_1$ and $\emptyset\neq U_2\subset \GL_n(K)$ Zariski-open such that $\inom_{\succ}(h(\phi(I)))=\gin_{\succ}(\phi(I))$ for every $h\in U_2$. Note that also the set $\emptyset\neq U_2'=\left\{h\circ \phi\in\GL_n(K): h\in U_2\right\}$ is Zariski-open, as it can be defined by the polynomials obtained by permuting the polynomials defining $U_2$ according to $\phi$. Hence, $U_1\cap U_2'\neq \emptyset$. For $k\in U_1\cap U_2'$ we have $\inom_{\succ}(k(I))=\gin_{\succ}(I)=\gin_{\succ}(\phi(I))$. In addition $\gin_{\succ'}(I)=\gin_{\succ'}(\phi(I))$ by the same argument. Hence, $\gin_{\succ}(I)=\gin_{\succ'}(I)$. Since for any reverse lexicographic order $\succ$ we have $\depth(\gin_{\succ}(I))=\depth(I)$, this implies that strongly stable ideals are maximal-$\gdepth$ ideals.
\end{ex}

\begin{ex}
The second example class is the class of ideals such that $I$ and every $\gin_{\succ}(I)$ is generated by polynomials of the same degree. We can see that these ideals are also maximal-$\gdepth$ as follows. Let $S=K[x_1,\ldots,x_n]$ with the standard $\Z$-grading and note that $S/I$ is a graded $S$-module. We denote by $\beta_{i,j}$ the graded Betti number $\beta_{i,j}=\beta_{i,j}(S/I)=\dim_K(\Tor_i^S(S/I,K))_j$. For $d\in\N$ let $S(-d)$ be the graded module $S$ with the grading given by $S(-d)_j=S_{j-d}$. Recall that for some $d\in \N$ we say that $I$ has a $d$-linear resolution if and only if $\beta_{i,i+j}=0$ for $j\neq d$. This is equivalent to the fact that the minimal graded free resolution of $S/I$ has the form $$0\rightarrow S(-d-p+1)^{\beta_p}\rightarrow\ldots\rightarrow S(-d-1)^{\beta_2}\rightarrow S(-d)^{\beta_1}\rightarrow S\rightarrow S/I\rightarrow0,$$ where $\beta_i=\beta_{i,i+d}$ is the $i$th total Betti number and $p$ is the projective dimension $p=\projdim(S/I)$ of $S/I$. Let $I\subset K[x_1,\ldots,x_n]$ be a graded ideal such that $I$ and $\gin_{\succ}(I)$ are generated by polynomials of degree $d$ for every term order $\succ$. Let $J=\gin_{\succ}(I)$ be a given generic initial ideal if $I$. We now show that $\depth(I)=\depth(J)$. As $J$ is strongly stable and generated in one degree, the minimal graded free resolution (as constructed in \cite{ELKE}) is linear. Since $\beta_{i,j}(S/I)\leq \beta_{i,j}(S/J)$ for every $i,j$ (see for example \cite[Proposition 3.3]{BRCO}), this implies that $S/I$ has a linear resolution as well. So $I$ and $J$ have linear resolutions, which in turn means that their total Betti numbers depend only on the Hilbert series of $S/I$ and $S/J$ respectively. But $H_{S/I}(t)=H_{S/J}(t)$, so the Betti numbers and in particular the projective dimensions of $S/I$ and $S/J$ are the same. By the Auslander-Buchsbaum formula $\depth(I)+\projdim(S/I)=n$ it follows that $\depth(I)=\depth(J)$.  This is true for every generic initial ideal of $I$. Hence, $I$ is a maximal-$\gdepth$ ideal.
\end{ex}

In this section we give a structural result on generic tropical varieties of maximal-$\gdepth$ ideals. We will see that these as fans are closely related to the following refinement of $\Wc_n^m$.

\begin{defn}
Let $\Wc_n^m$ be the $m$-skeleton of the standard tropical fan in $\R^n$ and $0<t<m-1$. The refinement of $\Wc_n^m$ containing all open cones $$\left\{\omega\in\R^n: \omega_{i_1}=\ldots=\omega_{i_{n-m+1}}<\omega_{i_{n-m+2}},\ldots,\omega_{i_{n-t}}<\omega_{i_{n-t+1}},\ldots,\omega_{i_n}\right\}$$ for any permutation $(i_1,\ldots,i_n)$ of $\left\{1,\ldots,n\right\}$ as maximal open cones will be called the \emph{$t$-refinement of $\Wc_n^m$} and denoted by $\Wc_n^{m,t}$.
\end{defn}

We need the following technical statement.

\begin{lemma}\label{niemeyer}
Let $I\subset K[x_1,\ldots,x_n]$ be a maximal-$\gdepth$ ideal with $\dim(I)=m<n$ and $\depth(I)=t$ for some $0<t<m-1$. Let $\omega\in \mathring{C}$ for some maximal cone $C$ of $\gT(I)$ and let $\omega_{i_1}\leq\ldots\leq\omega_{i_n}$ for $\left\{i_1,\ldots,i_n\right\}=\left\{1,\ldots,n\right\}$. Then $\omega_{i_{n-t}}<\omega_{i_{n-t+1}}$.
\end{lemma}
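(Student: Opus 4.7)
The plan is to argue by contradiction. Suppose $\omega_{i_{n-t}} = \omega_{i_{n-t+1}}$; using that $I$ is maximal-$\gdepth$, I will produce two generic initial ideals of $I$ with distinct variable supports that nevertheless must coincide.

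By Corollary \ref{rosenberg}, $\mathring{C}$ is contained in some maximal cone $\mathring{C}_A$ of $\Wc_n^m$, and since both cones have dimension $m$, the open cone $\mathring{C}$ is in fact open inside $\mathring{C}_A$. Because $\omega$ lies in this open set, I can choose small perturbations $v, v' \in \mathring{C}$ of $\omega$ such that
\[
v_{i_1} = \ldots = v_{i_{n-m+1}} < v_{i_{n-m+2}} < \ldots < v_{i_n}
\]
(strict at every non-minimal position), while $v'$ is obtained from $v$ by swapping the values at positions $n-t$ and $n-t+1$, so that $v'_{i_{n-t+1}} < v'_{i_{n-t}}$. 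Concretely, setting $v_{i_k} = \omega_{i_k} + k\delta$ for $k > n-m+1$ (and likewise for $v'$ with the perturbations at positions $n-t$ and $n-t+1$ exchanged) works for sufficiently small $\delta > 0$; the assumed equality $\omega_{i_{n-t}} = \omega_{i_{n-t+1}}$ is precisely what allows both $v$ and $v'$ to remain inside the open set $\mathring{C}$.

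Now fix the reverse lexicographic order $\succ$ with $x_{i_1} \succ \ldots \succ x_{i_n}$. The refinements $\succ_v$ and $\succ_{v'}$ induce variable orderings $x_{i_1} \succ_v \ldots \succ_v x_{i_n}$ and $x_{i_1} \succ_{v'} \ldots \succ_{v'} x_{i_{n-t-1}} \succ_{v'} x_{i_{n-t+1}} \succ_{v'} x_{i_{n-t}} \succ_{v'} \ldots \succ_{v'} x_{i_n}$, respectively. Since $I$ is maximal-$\gdepth$ with $\depth(I) = t$, both $\gin_{\succ_v}(I)$ and $\gin_{\succ_{v'}}(I)$ have depth $t$, so Proposition \ref{timsmail}(iii),(iv) gives that the minimal generators of $\gin_{\succ_v}(I)$ lie in $K[x_{i_1}, \ldots, x_{i_{n-t}}]$ and include one divisible by $x_{i_{n-t}}$, while those of $\gin_{\succ_{v'}}(I)$ lie in $K[x_{i_1}, \ldots, x_{i_{n-t-1}}, x_{i_{n-t+1}}]$ and include one divisible by $x_{i_{n-t+1}}$. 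In particular these two monomial ideals are distinct.

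On the other hand, $v, v' \in \mathring{C}$ lie in the same maximal cone of $\gT(I)$, hence $\inom_v(g(I)) = \inom_{v'}(g(I)) = \inom_\omega(g(I)) =: J$ for every $g \in U$. Choosing $g$ in the nonempty Zariski-open intersection of $U$ with the sets where $\inom_{\succ_v}(g(I)) = \gin_{\succ_v}(I)$ and $\inom_{\succ_{v'}}(g(I)) = \gin_{\succ_{v'}}(I)$, and using the standard identity $\inom_{\succ_\eta}(g(I)) = \inom_\succ(\inom_\eta(g(I)))$ for $\eta \in \{v, v'\}$, I obtain $\gin_{\succ_v}(I) = \inom_\succ(J) = \gin_{\succ_{v'}}(I)$, contradicting the previous paragraph. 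The main subtle point is the topological claim that $\mathring{C}$ is open in $\mathring{C}_A$ together with the accompanying construction of $v, v' \in \mathring{C}$ with the prescribed strict coordinate orderings; once this is in place, the rest is a clean application of Proposition \ref{timsmail}.
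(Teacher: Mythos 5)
Your proposal is correct and follows essentially the same route as the paper: assume $\omega_{i_{n-t}}=\omega_{i_{n-t+1}}$, perturb $\omega$ within the maximal cone in two ways that swap the order of the $(n-t)$-th and $(n-t+1)$-th coordinates, and use the maximal-$\gdepth$ hypothesis together with Proposition \ref{timsmail}(iii),(iv) to show the two resulting generic initial ideals have different variable supports, contradicting that both perturbations give the same weighted initial ideal. The only cosmetic difference is that you justify membership of the perturbed points in $\mathring{C}$ via openness of $\mathring{C}$ in $\mathring{C}_A$ (Corollary \ref{rosenberg} plus a dimension count), whereas the paper argues via a family of perturbations $u^\epsilon,v^\epsilon$ accumulating at $\omega$; both rest on the same fact.
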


\begin{proof}
Without loss of generality we have $$\omega_1=\ldots=\omega_{n-m+1}<\omega_{n-m+2}\leq\ldots\leq\omega_n,$$ because $\omega\in\left|\Wc_n^m\right|$. Let us assume that $\omega_{n-t}=\omega_{n-t+1}$. For $\epsilon>0$ we define $u^{\epsilon}\in \R^n$ by

\[ u^{\epsilon}_i = \left\{
\begin{array}{ll}
\omega_i-\epsilon & \text{ for } i<n-t,\\
\omega_i & \text{ for } i=n-t,\\
\omega_i+\epsilon & \text{ for } i=n-t+1,\\
\omega_i+2\epsilon & \text{ for } i>n-t+1.
\end{array}
\right. \]

And in the same way we define $v^{\epsilon}_i\in \R^n$ by

\[ v^{\epsilon}_i = \left\{
\begin{array}{ll}
\omega_i-\epsilon & \text{ for } i<n-t,\\
\omega_i & \text{ for } i=n-t+1,\\
\omega_i+\epsilon & \text{ for } i=n-t,\\
\omega_i+2\epsilon & \text{ for } i>n-t+1.
\end{array}
\right. \]

Note that $u^{\epsilon}$ and $v^{\epsilon}$ are contained in $\gT(I)$ for every choice of $\epsilon$. Let $\succ$ be any term order. Then $x_i \succ_{u^{\epsilon}} x_{n-t}\succ_{u^{\epsilon}} x_{n-t+1} \succ_{u^{\epsilon}} x_j$ and $x_i \succ_{v^{\epsilon}} x_{n-t+1}\succ_{v^{\epsilon}} x_{n-t} \succ_{v^{\epsilon}} x_j$ for $i<n-t$, $j>n-t+1$. Since $\depth(I)=\depth(\gin_{\succ_{u^{\epsilon}}}(I))=\depth(\gin_{\succ_{v^{\epsilon}}}(I))$, by Proposition \ref{timsmail} the monomial ideal $\gin_{\succ_{u^{\epsilon}}}(I)$ contains a minimal monomial generator which is divisible by $x_{n-t}$, but none that is divisible by $x_{n-t+1}$. On the other hand $\gin_{\succ_{v^{\epsilon}}}(I)$ contains a minimal generator which is divisible by $x_{n-t+1}$, but none which is divisible by $x_{n-t}$. Hence, the reduced Gröbner bases of $\inom_{u^{\epsilon}}(g(I))$ and $\inom_{v^{\epsilon}}(g(I))$ are different with respect to the same term order $\succ$. Since the reduced Gröbner basis of an ideal is unique with respect to a given term order, this implies $\inom_{u^{\epsilon}}(g(I))\neq\inom_{v^{\epsilon}}(g(I))$, so $u^{\epsilon}$ and $v^{\epsilon}$ are in different cones of $\gT(I)$. So in every neighborhood of $\omega$ in $\gT(I)$ there are elements which are in different cones of $\gT(I)$. This is a contradiction to the fact that $\omega\in\mathring{C}$ and $C$ is maximal.
\end{proof}

We can now show that for maximal-$\gdepth$ ideals $\gT(I)$ refines $\Wc_n^{m,t}$ as a fan.

\begin{prop}\label{hunt}
Let $I\subset K[x_1,\ldots,x_n]$ be a maximal-$\gdepth$ ideal with $\dim(I)=m<n$ and $0<\depth(I)=t<m-1$. Let $C$ be a maximal cone of $\gT(I)$. Then there exists a cone $D\subset \Wc_n^{m,t}$ such that $\mathring{C}\subset \mathring{D}$.
\end{prop}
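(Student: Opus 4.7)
The strategy is to read off from any $\omega\in\mathring{C}$ a pair of subsets $A\subset B\subset\{1,\ldots,n\}$ with $|A|=n-m+1$ and $|B|=n-t$, show that the pair is the same for every $\omega\in\mathring{C}$, and then check that the cone of $\Wc_n^{m,t}$ associated to $(A,B)$ is the desired $D$.

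First I would invoke Corollary \ref{rosenberg} to locate a cone $C_A$ of $\Wc_n^m$ with $\mathring{C}\subset\mathring{C}_A$. Since $\gT(I)$ is pure of dimension $m$ (the corollary after Proposition \ref{baumann}) and $C$ is maximal, $\dim C=m$, forcing $|A|=n-m+1$; in particular every $\omega\in\mathring{C}$ attains its minimum precisely on $A$. Next, for each $\omega\in\mathring{C}$, order coordinates as $\omega_{i_1}\leq\ldots\leq\omega_{i_n}$. Lemma \ref{niemeyer} supplies the strict gap $\omega_{i_{n-t}}<\omega_{i_{n-t+1}}$, so the set $B(\omega):=\{j: \omega_j<\omega_{i_{n-t+1}}\}$ is unambiguously defined, has cardinality $n-t$, and contains $A$ (because $n-t\geq n-m+2>n-m+1$ when $t<m-1$, and the minimum indices lie at the bottom of the sorted order).

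The key step is to show that $B(\omega)$ is the same for every $\omega\in\mathring{C}$. For each candidate subset $B\subset\{1,\ldots,n\}$ with $|B|=n-t$, consider
\[
U_B:=\{\omega\in\mathring{C}: \omega_j<\omega_k \text{ for all } j\in B,\ k\notin B\},
\]
which is open in $\mathring{C}$ as it is cut out by finitely many strict linear inequalities. By Lemma \ref{niemeyer} every $\omega\in\mathring{C}$ lies in some $U_B$ (namely $B=B(\omega)$), and distinct $U_B$ are disjoint (an element of $U_B\cap U_{B'}$ with $j\in B\setminus B'$ and $k\in B'\setminus B$ would give $\omega_j<\omega_k$ and $\omega_k<\omega_j$ simultaneously). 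Hence the nonempty $U_B$ form a finite partition of $\mathring{C}$ by open sets. Since $\mathring{C}$ is the relative interior of a convex cone, it is connected, so exactly one $U_B$ is nonempty; call this common value $B$. This constancy argument is the main obstacle, and it hinges crucially on the \emph{strict} inequality delivered by Lemma \ref{niemeyer}; without it, the $U_B$ would not be open and the connectedness argument would collapse.

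Finally, let $D$ be the cone of $\Wc_n^{m,t}$ corresponding to $(A,B)$, whose relative interior is
\[
\mathring{D}=\{\omega\in\R^n: \omega_j=\omega_k\text{ for } j,k\in A;\ \omega_j<\omega_\ell\text{ for } j\in A, \ell\notin A;\ \omega_j<\omega_k\text{ for } j\in B, k\notin B\}.
\]
By the first two steps every $\omega\in\mathring{C}$ satisfies all three defining conditions, so $\mathring{C}\subset\mathring{D}$, completing the proof.
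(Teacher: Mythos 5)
Your proof is correct and follows essentially the same route as the paper: Lemma \ref{niemeyer} supplies the strict gap at position $n-t$, and connectedness (convexity) of $\mathring{C}$ then forces it into a single open maximal cone of $\Wc_n^{m,t}$. The paper phrases the connectedness step as ``$\mathring{C}$ avoids the $(m-1)$-skeleton of $\Wc_n^{m,t}$ and hence lies in one connected component of its complement, and these components are exactly the open maximal cones,'' whereas you make the same point via the explicit partition of $\mathring{C}$ into the relatively open sets $U_B$; you are also a bit more explicit about pinning down the minimum set $A$ via Corollary \ref{rosenberg}, which the paper leaves implicit.
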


\begin{proof}
By Lemma \ref{niemeyer} we know that $\mathring{C}$ does not intersect the $m-1$-skeleton $X$ of $\Wc_n^{m,t}$. So $\mathring{C}$ must be contained in the union $\R^n\backslash X$ of the open maximal cones of $\Wc_n^{m,t}$. Since it is convex, $\mathring{C}$ is connected and thus contained in one connected component of $\R^n\backslash X$. But the connected components of $\R^n\backslash X$ are the open maximal cones itself, so $\mathring{C}$ must be contained in some maximal open cone $\mathring{D}$ of $\Wc_n^{m,t}$.
\end{proof}

This shows that the fan $\gT(I)$ is always finer than the fan $\Wc_n^{m,t}$ for maximal-$\gdepth$ ideals. We can now give a complementary result by showing that every maximal cone contains a $t$-dimensional orthant of $\R^n$. For this we will need the following basic observation from Gröbner basis theory.

\begin{lemma}\label{wiese}
Let $I\subset [x_1,\ldots,x_n]$ be a graded ideal and $\omega,\omega'\in\R^n$. Let $\succ$ be a term order and $\Gc$ be the reduced Gröbner basis of $I$ with respect to $\succ_{\omega}$. If $\inom_{\omega}(f)=\inom_{\omega'}(f)$ for every $f\in \Gc$, then $\inom_{\omega}(I)=\inom_{\omega'}(I)$.
\end{lemma}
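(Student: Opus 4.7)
The plan is to use the standard characterization of weighted initial ideals through Gr\"obner bases and then close the gap via a Hilbert-function argument.

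First I would recall a classical fact from Gr\"obner basis theory: whenever $\Gc$ is a Gr\"obner basis of $I$ with respect to the refinement $\succ_{\omega}$, the set $\{\inom_{\omega}(f):f\in \Gc\}$ generates the weighted initial ideal $\inom_{\omega}(I)$ (and is moreover a Gr\"obner basis of $\inom_{\omega}(I)$ with respect to $\succ$). This is the reason $\succ_{\omega}$ is designed the way it is: taking $\succ$-leading terms of $\omega$-initial forms is the same as taking $\succ_{\omega}$-leading terms, so any $\succ_{\omega}$-standard expression of an element of $I$ passes to a $\succ$-standard expression of its $\omega$-initial form.

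Applying this to our situation, $\inom_{\omega}(I)$ is generated by $\{\inom_{\omega}(f):f\in \Gc\}$. By hypothesis $\inom_{\omega}(f)=\inom_{\omega'}(f)$ for every $f\in \Gc$, so these generators all lie in $\inom_{\omega'}(I)$. Consequently
\[
\inom_{\omega}(I)=(\inom_{\omega}(f):f\in \Gc)=(\inom_{\omega'}(f):f\in \Gc)\subseteq \inom_{\omega'}(I).
\]

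The remaining step, which I expect to be the only subtle point, is to upgrade this inclusion to equality. Since $I$ is graded, the standard one-parameter flat deformation associated to a weight vector shows that passing from $I$ to $\inom_{\omega}(I)$ preserves the Hilbert function; the same is true for $\omega'$. Hence $\inom_{\omega}(I)$ and $\inom_{\omega'}(I)$ have identical Hilbert functions, and the inclusion above forces them to be equal in each graded component and therefore as ideals. This completes the argument.
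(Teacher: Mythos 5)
Your proof is correct and follows essentially the same route as the paper's: both arguments first use that $\left\{\inom_{\omega}(f):f\in\Gc\right\}$ is a Gr\"obner basis of $\inom_{\omega}(I)$ with respect to $\succ$ to obtain the inclusion $\inom_{\omega}(I)\subseteq\inom_{\omega'}(I)$. The paper upgrades this inclusion to an equality by passing to the monomial initial ideals $\inom_{\succ_{\omega}}(I)\subseteq\inom_{\succ_{\omega'}}(I)$ and citing that two initial ideals of the same ideal cannot be properly nested --- which is the same Hilbert-function count you invoke directly on the weighted initial ideals --- so the two closing steps are interchangeable.
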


\begin{proof}
Since $\left\{\inom_{\omega}(f):f\in\Gc\right\}$ is a reduced Gröbner basis for $\inom_{\omega}(I)$ with respect to $\succ$ (see for example \cite[Lemma 2.4.2]{MATH}), it follows that $\inom_{\omega}(I)=(\inom_{\omega'}(f):f\in\Gc)\subset\inom_{\omega'}(I)$. This implies that $\inom_{\succ_{\omega}}(I)\subset\inom_{\succ_{\omega'}}(I)$. As there cannot be a proper inclusion of two initial ideals (see \cite[Corollary 2.2.3]{MATH}), this means $\inom_{\succ_{\omega}}(I)=\inom_{\succ_{\omega'}}(I)$. Therefore we have $\inom_{\omega}(I)=\inom_{\omega'}(I)$, because $\Gc$ is also a reduced Gröbner basis of $g(I)$ with respect to $\succ_{\omega'}$.
\end{proof}

\begin{prop}\label{pizarro}
Let $I\subset K[x_1,\dots,x_n]$ be a maximal-$\gdepth$ ideal with $\dim(I)=m$, $0<\depth(I)=t<m-1$ and $c$ be the maximal total degree of a minimal generator of a generic initial ideal of $I$. Let $\omega\in \gT(I)$ with $$0=\omega_1=\ldots=\omega_{n-m+1}<\omega_{n-m+2}\leq\ldots\leq\omega_{n-t}<\omega_{n-t+1},\ldots,\omega_n$$ such that $\omega_{n-t}c<\omega_j$ for $j>n-t$ and $\omega\in \mathring{C}$ for some maximal cone $C$ of $\gT(I)$. Then $$\omega+\cone(e_{n-t+1},\ldots,e_n)\subset \mathring{C},$$ where $e_i$ denotes the $i$th standard basis vector of $\R^n$.
\end{prop}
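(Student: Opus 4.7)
The plan is to reduce the claim to the weight-refinement $\succ_\omega$ of the degree reverse lexicographic order $\succ$ with $x_1 \succ \ldots \succ x_n$ and then invoke Lemma \ref{wiese}. Fix $g \in U$ and let $\Gc$ be the reduced Gr\"obner basis of $g(I)$ with respect to $\succ_\omega$; by definition of $U$, the set $\{\inom_{\succ_\omega}(f) : f \in \Gc\}$ is the minimal monomial generating set of $\inom_{\succ_\omega}(g(I)) = \gin_{\succ_\omega}(I)$. Since $I$ is maximal-$\gdepth$, $\depth(\gin_{\succ_\omega}(I)) = t$, so Proposition \ref{timsmail}(iii) forces these leading monomials to lie in $K[x_1, \ldots, x_{n-t}]$, and by the choice of $c$ they have total degree at most $c$. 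Because $g(I)$ is graded, each $f \in \Gc$ is homogeneous of the same degree as $\inom_{\succ_\omega}(f)$, which is therefore at most $c$.

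The core of the proof is then a weight comparison. Fix $f \in \Gc$ of degree $d \leq c$ and split its terms according to whether they lie in $K[x_1, \ldots, x_{n-t}]$ or involve some $x_j$ with $j > n-t$. A term $a_\nu x^\nu$ of the first type satisfies $\wt_\omega(x^\nu) \leq d\,\omega_{n-t} \leq c\,\omega_{n-t}$ since $\omega_i \leq \omega_{n-t}$ for $i \leq n-t$, while a term of the second type satisfies $\wt_\omega(x^\nu) \geq \omega_j > c\,\omega_{n-t}$ by the hypothesis $c\,\omega_{n-t} < \omega_j$. Thus the terms of $f$ of minimal $\omega$-weight lie entirely in $K[x_1, \ldots, x_{n-t}]$, and in particular $\inom_\omega(f)$ is supported there. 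For any $v \in \cone(e_{n-t+1}, \ldots, e_n)$ we have $\wt_v(x^\nu) = 0$ for monomials $x^\nu$ supported in $K[x_1, \ldots, x_{n-t}]$ and $\wt_v(x^\nu) \geq 0$ otherwise, so $\wt_{\omega+v}$ agrees with $\wt_\omega$ on the first type and dominates $\wt_\omega$ on the second. The minimum is again attained exactly by the terms achieving the minimum $\omega$-weight, which gives $\inom_\omega(f) = \inom_{\omega+v}(f)$ for every $f \in \Gc$.

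Lemma \ref{wiese} now yields $\inom_\omega(g(I)) = \inom_{\omega+v}(g(I))$ for every $g \in U$, so $\omega$ and $\omega + v$ determine the same relatively open cone of $\GF(g(I)) = \gGF(I)$; since $\omega \in \mathring{C} \subset \gT(I)$ the common initial ideal contains no monomial, hence $\omega + v \in \mathring{C}$ as well. The main obstacle is the weight estimate in the middle paragraph: one must combine the maximal-$\gdepth$ hypothesis (so that Proposition \ref{timsmail} applies to $\gin_{\succ_\omega}(I)$ and confines leading terms to $K[x_1, \ldots, x_{n-t}]$) with the numerical gap $c\,\omega_{n-t} < \omega_j$ for $j > n-t$ to strictly separate the two types of terms; once this separation is in place the invariance under perturbation by $v$ is automatic.
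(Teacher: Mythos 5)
Your proposal is correct and follows essentially the same route as the paper's proof: use the maximal-$\gdepth$ hypothesis together with Proposition \ref{timsmail} to confine the leading terms of the reduced Gr\"obner basis with respect to $\succ_{\omega}$ to $K[x_1,\ldots,x_{n-t}]$, then use the gap $c\,\omega_{n-t}<\omega_j$ for $j>n-t$ to show $\inom_{\omega}(f)=\inom_{\omega+v}(f)$ for each basis element, and conclude via Lemma \ref{wiese}. Your explicit justification that each $f\in\Gc$ has degree at most $c$ (via homogeneity and the leading terms being minimal generators) is a point the paper leaves implicit, but the argument is the same.
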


\begin{proof}
Let $\succ$ be any term order. Then for the refinement $\succ_{\omega}$ of $\omega$ by Proposition \ref{timsmail} the generic initial ideal $\gin_{\succ_{\omega}}(I)$ is minimally generated in $K[x_1,\ldots,x_{n-t}]$ as $I$ is maximal-$\gdepth$ and $\depth(\gin_{\succ_{\omega}}(I))=\depth(I)=t$. Hence, for $g\in U$ there exists a reduced Gröbner basis $\Gc$ of $g(I)$ with respect to $\succ_{\omega}$ such that $\inom_{\succ_{\omega}}(f)\in K[x_1,\ldots,x_{n-t}]$ for every $f\in \Gc$.

We show that $\inom_{\omega'}(f)\in K[x_1,\ldots,x_{n-t}]$ for every $\omega'\in\omega+\cone(e_{n-t+1},\ldots,e_n)$ and every $f\in \Gc$. To see this we need to show that every term of $f$ which contains one of the $x_{n-t+1},\ldots,x_n$ has larger $\omega'$-weight than any term of $f$ in $K[x_1,\ldots,x_{n-t}]$. By the choice of $c$ the $\omega'$-weight of a term of $f$ in $K[x_1,\ldots,x_{n-t}]$ is bounded from above by $c\omega_{n-t}$. But any of the $x_{n-t+1},\ldots,x_n$ has weight strictly larger than $c \omega_{n-t}$. Since we already know that $f$ contains a term in $K[x_1,\ldots,x_{n-t}]$, we have $\inom_{\omega'}(f)\in K[x_1,\ldots,x_{n-t}]$. By the choice of $\omega'$ all terms in $K[x_1,\ldots,x_{n-t}]$ have the same $\omega$-weight and $\omega'$-weight. So $\inom_{\omega'}(f)=\inom_{\omega}(f)$ for $f\in\Gc$. Then by Lemma \ref{wiese} it follows that $\inom_{\omega}(g(I))=\inom_{\omega'}(g(I))$ for $g\in U$. Hence, $\omega'\in \mathring{C}$ for every $\omega'\in \omega+\cone(e_{n-t+1},\ldots,e_n)$.
\end{proof}

For maximal-$\gdepth$ ideals it is therefore possible to obtain the $\depth$ of the ideal from its generic tropical variety as shown in the following theorem.

\begin{thm}\label{vranjes}
Let $I\subset K[x_1,\ldots,x_n]$ be a maximal-$\gdepth$ ideal with $\dim(I)=m$ and $0<\depth(I)<m-1$. Then $$\depth(I)=\min\left\{ t\in \N: \gT(I)\text{ refines } \Wc_n^{m,t}\right\}.$$
\end{thm}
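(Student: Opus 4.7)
The plan is to establish both inequalities. Write $t = \depth(I)$ and $t^{*} = \min\{s \in \N : \gT(I)\text{ refines }\Wc_n^{m,s}\}$. The direction $t^{*} \leq t$ is immediate from Proposition \ref{hunt}, which already shows that $\gT(I)$ refines $\Wc_n^{m,t}$, so the minimum is at most $t$.

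For the reverse direction $t \leq t^{*}$, the plan is to argue by contradiction: assuming $\gT(I)$ refines $\Wc_n^{m,t'}$ for some $t' < t$, I will produce two points in a single relatively open maximal cone of $\gT(I)$ that land in distinct relatively open maximal cones of $\Wc_n^{m,t'}$. Let $c$ denote the maximal total degree of a minimal generator of any generic initial ideal of $I$, and choose $\omega \in \R^n$ with
$$0 = \omega_1 = \cdots = \omega_{n-m+1} < \omega_{n-m+2} < \cdots < \omega_{n-t} < \omega_{n-t+1} < \cdots < \omega_n$$
subject to $c\omega_{n-t} < \omega_j$ for each $j > n-t$. Such an $\omega$ lies in $\left|\Wc_n^m\right|$, hence in $\left|\gT(I)\right|$ by \cite[Corollary 8.4]{TK}. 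Since $\gT(I)$ is pure of dimension $m$, the union of its relatively open maximal cones is dense in $\left|\Wc_n^m\right|$, so a small perturbation of $\omega$ preserving the above strict inequalities places $\omega$ into $\mathring{C}$ for some maximal cone $C$ of $\gT(I)$. Proposition \ref{pizarro} then yields $\omega + \cone(e_{n-t+1}, \ldots, e_n) \subset \mathring{C}$.

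For the key construction, pick $N$ large and set $\omega^{(1)} = \omega + N e_{n-t+1}$ and $\omega^{(2)} = \omega + N e_{n-t+2}$; both sit in $\mathring{C}$. In $\omega^{(1)}$ the coordinate at position $n-t+1$ becomes the strict maximum, so position $n-t+2$ carries the sorted rank $n-t+1$; in $\omega^{(2)}$ the coordinate at position $n-t+2$ is the strict maximum, so position $n-t+1$ carries the sorted rank $n-t+1$. Since $t' \leq t-1$ and $t < m-1$, the index $n-t+1$ lies in the strictly ordered range $n-m+2,\ldots,n-t'$ of sorted ranks distinguished by the maximal open cones of $\Wc_n^{m,t'}$. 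Hence $\omega^{(1)}$ and $\omega^{(2)}$ must lie in different maximal open cones of $\Wc_n^{m,t'}$, which contradicts the assumed refinement because $\mathring{C}$ would then fail to be contained in any single relatively open cone of $\Wc_n^{m,t'}$.

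The main obstacle I foresee is notational rather than conceptual: verifying that $\omega^{(1)}$ and $\omega^{(2)}$ truly separate in $\Wc_n^{m,t'}$ via the sorted-rank description, and confirming that the small perturbation step indeed places $\omega$ into the relative interior of some maximal cone of $\gT(I)$. Both reduce to unpacking the definition of $\Wc_n^{m,t'}$ together with the purity of $\gT(I)$, and the conceptual weight is carried entirely by Propositions \ref{hunt} and \ref{pizarro}.
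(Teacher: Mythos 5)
Your proof is correct and takes essentially the same approach as the paper: Proposition \ref{hunt} gives the upper bound, and for the lower bound the paper likewise chooses $\omega\in\mathring{C}$ satisfying the hypotheses of Proposition \ref{pizarro} and produces a second point of $\mathring{C}$ by raising a single coordinate, so that the two points lie in distinct relatively open cones of $\Wc_n^{m,t'}$. Your witness pair $\omega+Ne_{n-t+1}$, $\omega+Ne_{n-t+2}$ differs only cosmetically from the paper's pair $\omega$, $\omega'$.
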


\begin{proof}
Let $T=\depth(I)$. By Proposition \ref{hunt} we already know that $\gT(I)$ refines $\Wc_n^{m,T}$ as a fan. On the other hand let $t<T$. Then we can choose $\omega\in\R^n$ such that $$\omega_1=\ldots=\omega_{n-m+1}<\omega_{n-m+2},\ldots,\omega_{n-t}<\omega_{n-t+1},\ldots,\omega_n$$ with $\omega\in \mathring{C}$ for some maximal cone $C$ of $\gT(I)$ and $\omega_{n-t} c<\omega_j$ for $j>n-t$, where $c$ is chosen as in Proposition \ref{pizarro}. Define $\omega'_i=\omega_i$ for $i\neq n-t$ and choose $\omega'_{n-t}>\omega_{n-t+1}$. Since $t<T$, by Proposition \ref{pizarro} we know that $\omega'\in\mathring{C}$ as well. But by definition of $\Wc_n^{m,t}$ we know that $\omega$ and $\omega'$ are in different open cones of $\Wc_n^{m,t}$. So $\gT(I)$ cannot refine $\Wc_n^{m,t}$ as a fan.
\end{proof}

\begin{rem}
Note that it is also possible to recover $\depth(I)$ from $\gT(I)$ for arbitrary graded ideals $I\subset K[x_1,\ldots,x_n]$ with $\dim I=m$ and $0<\depth(I)<m-1$ in the following way. Let $\depth(I)=t$ and $\succ$ be the reverse lexicographic term order with $x_1\succ\ldots\succ x_n$. Let $c$ be the maximal degree of a minimal generator of any generic initial ideal with respect to a reverse lexicographic term order. Choose $\omega\in \mathring{C}$ for some maximal cone $C$ of $\gT(I)$ as in Lemma \ref{proedl}. We now show that for this particular choice of $\omega$ we have $$\omega+\cone(e_{n-t+1},\ldots,e_n)\subset \mathring{C},$$ but $$\omega+\cone(e_{n-t},\ldots,e_n)\not\subset \mathring{C}.$$

Since $\succ$ and $\succ_{\omega}$ coincide up to degree $c$, this implies $\gin_{\succ_{\omega}}(I)=\gin_{\succ}(I)$. In particular, $\depth(\gin_{\succ_{\omega}}(I))=\depth(I)$. By the same proof as in Proposition \ref{pizarro} we obtain that $\omega+\cone(e_{n-t+1},\ldots,e_n)\subset \mathring{C}$.

Assume that $\omega+\cone(e_{n-t},\ldots,e_n)\subset \mathring{C}$. Let $\succ'$ be the degree reverse lexicographic term order with $x_1\succ'\ldots\succ'x_{n-t-1}\succ'x_{n-t+1}\succ'\ldots\succ'x_n\succ' x_{n-t}$. Then we define $\omega'\in\R^n$ by $\omega'_i=\omega_i$ for $i\neq n-t$ and $\omega'_{n-t}>\omega_n c$. By assumption we know that $\omega'\in\mathring{C}$. Since $\succ_{\omega'}$ and $\succ'$ coincide up to degree $c$ by Lemma \ref{proedl}, we know that $\gin_{\succ_{\omega'}}(I)=\gin_{\succ'}(I)$. As in the proof of Theorem \ref{tosic} we get that $$\gin_{\succ_{\omega}}(I)=\gin_{\succ}(I)\neq\gin_{\succ'}(I)=\gin_{\succ_{\omega'}}(I).$$ This implies $\inom_{\omega}(g(I))\neq\inom_{\omega'}(g(I))$ for $g\in U$ which is a contradiction to $\omega,\omega'\in \mathring{C}$. Hence, $\omega+\cone(e_{n-t},\ldots,e_n)\not\subset \mathring{C}$.

To obtain $\depth(I)$ from $\gT(I)$ we can therefore determine $\omega$ as described above. Then we have $$\depth(I)=\min\left\{t: \omega+\cone(e_{n-t+1},\ldots,e_n)\subset \mathring{C}\right\}$$ for this particular choice of $\omega$.
\end{rem}

As we have seen in Proposition \ref{hunt} the generic tropical variety of a maximal-$\gdepth$ ideal with $\dim I=m$ and $\depth(I)=t$ with $0<t<m-1$ always refines $\Wc_n^{m,t}$. It is also true that any of the fans $\Wc_n^{m,t}$ is the generic tropical variety of some ideal which we will see by focusing on a class of strongly stable ideals generated in degree $2$.

\begin{prop}\label{tziolis}
Let $0<m<n$ and $0<t<m-1$. The ideal $$I=(x_1,\ldots,x_{n-m-1},x_{n-m}^2,x_{n-m}x_{n-m+1},\ldots,x_{n-m}x_{n-t})\subset K[x_1,\ldots,x_n]$$ is a maximal-$\gdepth$ ideal with $\dim I=m$, $\depth(I)=t$ and $\gT(I)=\Wc_n^{m,t}$ as a fan.
\end{prop}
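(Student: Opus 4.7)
The plan is to proceed in three steps. First, I would verify $\dim I = m$, $\depth(I) = t$, and strong stability. Since $x_1, \ldots, x_{n-m-1}$ lie in $I$, quotienting them out gives $K[x_{n-m}, \ldots, x_n]/(x_{n-m}^2, x_{n-m}x_{n-m+1}, \ldots, x_{n-m}x_{n-t})$, a monomial quotient whose radical is $(x_{n-m})$, yielding $\dim I = m$. A standard monomial computation shows $x_{n-t+1}, \ldots, x_n$ form a regular sequence of $t$ linear forms on $R_I$ (none of them appears in any generator), while after modding these out every linear form is annihilated by $x_{n-m}$, so $\depth(I) = t$. Direct verification of the exchange property shows $I$ is strongly stable with respect to $x_1 > \cdots > x_n$, so by Example \ref{harnik}, $I$ is a maximal-$\gdepth$ ideal.

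Since $I$ is maximal-$\gdepth$ with $0 < \depth(I) = t < m-1$, Proposition \ref{hunt} gives that $\gT(I)$ refines $\Wc_n^{m,t}$ as a fan. To establish the converse, it suffices to show that each maximal open cone $\mathring{C}_{(A,B,C)}$ of $\Wc_n^{m,t}$ is contained in a single maximal open cone of $\gT(I)$, i.e.\ $\inom_\omega(g(I)) = \inom_{\omega'}(g(I))$ for all $\omega, \omega' \in \mathring{C}_{(A,B,C)}$ and $g \in U$. Setting $\ell_i = g(x_i)$, the explicit form of $I$ yields the decomposition $g(I) = L_g + \ell_{n-m} J_g$ where $L_g = (\ell_1, \ldots, \ell_{n-m-1})$ and $J_g = (\ell_1, \ldots, \ell_{n-t})$.

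The key technical claim is the factorization
\[ \inom_\omega(g(I)) = \inom_\omega(L_g) + \inom_\omega(\ell_{n-m}) \cdot \inom_\omega(J_g). \]
The inclusion $\supseteq$ is immediate from $L_g, \ell_{n-m} J_g \subseteq g(I)$ together with the identity $\inom_\omega(\ell_{n-m} h) = \inom_\omega(\ell_{n-m}) \inom_\omega(h)$ for every polynomial $h$. For $\subseteq$, any $f \in g(I)$ writes as $l + \ell_{n-m} h$ with $l \in L_g$ and $h \in J_g$; a case analysis on the $\omega$-weights of $\inom_\omega(l)$ and $\inom_\omega(\ell_{n-m} h)$ places $\inom_\omega(f)$ in the right-hand side, with the cancellation case $\inom_\omega(l) + \inom_\omega(\ell_{n-m} h) = 0$ handled by iterating on suitable lifts. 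Granted the factorization, each piece on the right depends only on $(A, B, C)$: $\inom_\omega(\ell_{n-m}) = \sum_{a \in A} g_{a,n-m} x_a$ is a generic linear form in $\{x_a : a \in A\}$; $\inom_\omega(L_g)$ is the ideal of $n-m-1$ generic linear forms in $\{x_a : a \in A\}$ obtained by Gaussian elimination (the top $n-m-1$ variables under $\succ_\omega$ all lie in $A$); and $\inom_\omega(J_g) = (x_i : i \in A \cup B)$, because the reduced Gröbner basis of $J_g$ with respect to $\succ_\omega$ has leading terms equal to the top $n-t$ variables, which are exactly those indexed by $A \cup B$. Hence $\inom_\omega(g(I)) = \inom_{\omega'}(g(I))$, completing $\gT(I) = \Wc_n^{m,t}$. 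I expect the main obstacle to be the careful handling of cancellation in the factorization; an alternative route is to work with a reduced Gröbner basis of $g(I)$ with respect to $\succ_\omega$ (whose leading terms, by maximal-$\gdepth$, lie in $K[x_i : i \in A \cup B]$), and apply Lemma \ref{wiese} after a weight analysis of the non-leading terms in the spirit of Proposition \ref{pizarro}.
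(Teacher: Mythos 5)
Your first two steps (the computation of $\dim I$, $\depth(I)$, strong stability, hence maximal-$\gdepth$ via Example \ref{harnik}, and the appeal to Proposition \ref{hunt} for one inclusion) match the paper and are fine. The gap is in the converse inclusion: your entire argument rests on the factorization $\inom_{\omega}(g(I))=\inom_{\omega}(L_g)+\inom_{\omega}(\ell_{n-m})\cdot\inom_{\omega}(J_g)$, and only the easy inclusion $\supseteq$ is actually proved. The inclusion $\subseteq$ is not a formal consequence of the decomposition $g(I)=L_g+\ell_{n-m}J_g$: in general $\inom_{\omega}(I_1+I_2)\supsetneq \inom_{\omega}(I_1)+\inom_{\omega}(I_2)$ is possible. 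For instance, with $I_1=(x)$, $I_2=(y-x)$ and $\omega=(0,1)$ one has $\inom_{\omega}(I_1)+\inom_{\omega}(I_2)=(x)$ while $\inom_{\omega}(I_1+I_2)=(x,y)$. So the ``cancellation case,'' which you dispose of with the phrase ``handled by iterating on suitable lifts,'' is precisely where such statements fail, and no mechanism is given for why the iteration terminates inside the right-hand side here (one would need, say, a standard-basis argument for the pair $(L_g,\ell_{n-m}J_g)$, or a Hilbert-function comparison showing the right-hand side already has the Hilbert function of $g(I)$). Until that is supplied, the proof does not go through. Your identifications of the three pieces are correct, by the way -- in particular $\inom_{\omega}(J_g)=(x_i: i\in A\cup B)$ does hold because the tails of the reduced Gr\"obner basis of a generic linear ideal with respect to $\succ_{\omega}$ are supported on the $C$-variables, which have strictly larger weight -- so the factorization, if true, would finish the argument; but it is exactly the unproved step.

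The paper's proof takes the route you only mention as a fallback: fix $\omega\in\mathring{D}$, take the reduced Gr\"obner basis $\Gc$ of $g(I)$ with respect to $\succ_{\omega}$ for a suitably chosen reverse lexicographic $\succ$, use strong stability to identify $\gin_{\succ_{\omega}}(I)$ as a permuted copy of $I$ (so every leading term is $x_{i_j}$ with $j\leq n-m-1$ or $x_{i_{n-m}}x_{i_k}$ with $k\leq n-t$), and then argue term by term that $\inom_{\omega}(f)=\inom_{\omega'}(f)$ for each $f\in\Gc$ before invoking Lemma \ref{wiese}. If you pursue that route, be aware of a second subtlety your sketch glosses over: $\omega$ and $\omega'$ in the same open cone of $\Wc_n^{m,t}$ may order the middle-block variables (and the last-block variables) differently among themselves, so it is not automatic that $\inom_{\succ_{\omega'}}(f)=\inom_{\succ_{\omega}}(f)$; the paper rules out a leading term of the form $x_{i_a}x_{i_b}$ with both indices in the middle block by a dimension argument via Proposition \ref{timsmail}, and separately excludes terms such as $x_{i_{n-m+1}}x_{i_s}$ from $f$ by exhibiting weight vectors in $\gT(I)$ for which such a term would be a monomial initial form. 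These exclusions are the real content of the converse inclusion and are absent from your proposal.
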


\begin{proof}
We first show that $\dim I=m$ and $\depth(I)=t$. Since $I$ is strongly stable with respect to the usual graded reverse lexicographic order $\succ$, we have $\gin_{\succ}(I)=I$. So $I$ has only one minimal prime by \cite[Corollary 15.25]{E} which is $(x_1,\ldots,x_{n-m})$. Thus, $\dim I=m$. To see that $\depth(I)=t$, note that $I$ is strongly stable with respect to the usual reverse lexicographic order $\succ$. Hence, $\gin_{\succ}(I)=I$ and by Proposition \ref{timsmail} it follows that $\depth(I)=n-(n-t)=t$. In particular, $I$ is a maximal-$\gdepth$ ideal (see Example \ref{harnik}).

By Proposition \ref{hunt} we know that every maximal cone $\mathring{C}$ of $\gT(I)$ is contained in some maximal cone $\mathring{D}$ of $\Wc_n^{m,t}$. So it remains to show that for every $\omega,\omega'\in\mathring{D}$ for some maximal cone $D$ of $\Wc_n^{m,t}$ we have $\inom_{\omega}(g(I))=\inom_{\omega'}(g(I))$ for $g\in U$. Let $D$ be the maximal cone of $\Wc_n^{m,t}$ given by $$\mathring{D}=\left\{\omega\in\R^n:\omega_{i_1}=\ldots=\omega_{i_{n-m+1}}<\omega_{i_{n-m+2}},\ldots,\omega_{i_{n-t}}<\omega_{i_{n-t+1}},\ldots,\omega_{i_n} \right\}$$ for some permutation $(i_1,\ldots,i_n)$ of $\left\{1,\ldots,n\right\}$. Let $\omega\in \mathring{D}$ be fixed, $\succ_{\omega}$ the refinement of $\omega$ with respect to the reverse lexicographical order $\succ$ with $x_{i_{n-m+2}}\succ\ldots\succ x_{i_n}\succ x_{i_1}\succ\ldots\succ x_{i_{n-m+1}}$ and $\Gc$ the reduced Gröbner basis of $g(I)$ with respect to $\succ_{\omega}$ for a fixed $g\in U$. Note that $x_{i_1}\succ_{\omega}\ldots\succ_{\omega} x_{i_{n-m+1}}$ and $x_{i_k}\succ_{\omega} x_{i_j}$ for $k\in \left\{n-m+2,\ldots,n-t\right\}$, $j\in\left\{n-t+1,\ldots,n\right\}$. Let $(q_1,\ldots,q_n)$ be the permutation on $\left\{1,\ldots,n\right\}$ such that $x_{q_1}\succ_{\omega}x_{q_2}\succ_{\omega}\ldots\succ_{\omega}x_{q_n}$. As in Example \ref{harnik} we know that $\gin_{\succ_{\omega}}(I)=\phi(I)$ for the $K$-algebra isomorphism $\phi$ induced by $\phi(x_j)=x_{q_j}$. So $$\gin_{\succ_{\omega}}(I)=(x_{i_1},\ldots,x_{i_{n-m-1}},x_{i_{n-m}}^2,x_{i_{n-m}}x_{i_{n-m+1}},\ldots,x_{i_{n-m}}x_{i_{n-t}}).$$ Hence, $\inom_{\succ{\omega}}(f)=x_{i_j}$ for some $j\in\left\{1,\ldots,n-m-1\right\}$ or $\inom_{\succ{\omega}}(f)=x_{i_{n-m}}x_{i_k}$ for some $k\in \left\{n-m,\ldots,n-t\right\}$ for every $f\in \Gc$. Let $\omega'\in\mathring{D}$. We now show that $\inom_{\omega}(f)=\inom_{\omega'}(f)$ for every $f\in \Gc$.

If $\inom_{\succ_{\omega}}(f)=x_{i_j}$ for some $j\in\left\{1,\ldots,n-m-1\right\}$, then by comparing weights $\inom_{\omega}(f)$ is exactly the sum of all linear terms $a_{i_k} x_{i_k}$ with $a_{i_k}\in K$, $k\in\left\{1,\ldots,n-m-1\right\}$ which appear in $f$. But the same is true for $\inom_{\omega'}(f)$, since $\omega$ and $\omega'$ have the same minimal coordinates. So in this case $\inom_{\omega}(f)=\inom_{\omega'}(f)$.

If $\inom_{\succ_{\omega}}(f)=x_{i_{n-m}}x_{i_k}$ for some $k\in\left\{n-m,\ldots,n\right\}$ we have to distinguish two subcases:
\begin{enumerate}
\item If $k=n-m$ or $k=n-m+1$, then $\inom_{\omega}(f)$ is the sum of all monomials in $K[x_{n-m},x_{n-m+1}]$ which appear in $f$. Again the same is true for $\inom_{\omega'}(f)$ by the same argument as before, so $\inom_{\omega}(f)=\inom_{\omega'}(f)$.
\item For $k>n-m+1$ we need to show that certain terms cannot appear in $f$. Firstly note that no term which is divisible by any of $x_{i_1},\ldots,x_{i_{n-m-1}}$ can appear in $f$, since $f$ is part of a reduced Gröbner basis with respect to $\succ_{\omega}$ and such a term would be divisible by a leading term of another element of $\Gc$. For the same reason $f$ cannot contain the monomial $x_{i_{n-m}} x_{i_s}$ for $s\in\left\{n-m+2,\ldots,n-t\right\}\backslash\left\{k\right\}$. Note that $x_{i_{n-m+1}}^2$ cannot appear in $f$ either, since then $\wt_{\omega}(x_{i_{n-m+1}}^2)<\wt_{\omega}(x_{i_{n-m}}x_{i_k})$. Furthermore, assume that $f$ contains the monomial $x_{i_{n-m+1}}x_{i_s}$ for some index $s\in\left\{n-m+2,\ldots,n-t\right\}\backslash\left\{k\right\}$. Then for $v\in\R^n$ with $v_{i_1}=\ldots=v_{i_{n-m+1}}<v_{i_s}<v_{i_j}$ for $j\in\left\{n-m+2,\ldots,n\right\}\backslash\left\{k\right\}$ we have $\inom_{v}(f)=x_{i_{n-m+1}}x_{i_s}$ is a monomial, since every other possible term of $f$ has greater $v$-weight. This is a contradiction to $v\in \gT(I)$. This implies $x_{i_{n-m+1}}x_{i_s}$ for $s\in\left\{n-m+2,\ldots,n-t\right\}\backslash\left\{k\right\}$ does not appear in $f$ either.

With this we can determine the initial forms $\inom_{\omega}(f)$ and $\inom_{\omega'}(f)$. As we have $\wt_{\omega}(x_{i_{n-m}}x_{i_k})=\wt_{\omega}(x_{i_{n-m+1}}x_{i_k})$, these two terms have to appear in $\inom_{\omega}(f)$, if $x_{i_{n-m+1}}x_{i_k}$ is a term of $f$. Assume there exists another term in $\inom_{\omega}(f)$, then it would have to be of the form $x_{i_{n-m}}x_{i_r}$ or $x_{i_{n-m+1}}x_{i_r}$ for some $r\in\left\{n-t+1,\ldots,n\right\}$ or of the form $x_{i_a} x_{i_b}$ for some $a,b\in\left\{n-m+2,\ldots,n-t\right\}$. The first case cannot occur, since $\wt_{\omega}(x_{i_{n-m}}x_{i_r})=\wt_{\omega}(x_{i_{n-m+1}}x_{i_r})>\wt_{\omega}(x_{i_{n-m}}x_{i_k})$. Assume that $x_{i_a} x_{i_b}$ appears in $\inom_{\omega}(f)$ for some $a,b\in\left\{n-m+2,\ldots,n-t\right\}$, then of course $\wt_{\omega}(x_{i_{n-m}} x_{i_k})=\wt_{\omega}(x_{i_a} x_{i_b})$.  But we know that $x_{i_a} x_{i_b}\succ x_{i_{n-m}} x_{i_k}$, by the choice of $\succ$. This is a contradiction to $\inom_{\succ_{\omega}}(f)=x_{i_{n-m}}x_{i_k}$, so $\inom_{\omega}(f)$ only contains the monomials $x_{i_{n-m}}x_{i_k}$ and $x_{i_{n-m+1}}x_{i_k}$.

The same is true for $\inom_{\omega'}(f)$ as we can see as follows. We show that $\inom_{\succ_{\omega'}}(f)=x_{i_{n-m}}x_{i_k}$ as well. Then by the same argument as above it follows that only the terms $x_{i_{n-m}}x_{i_k}$ and $x_{i_{n-m+1}}x_{i_k}$ appear in $\inom_{\omega'}(f)$, and thus $\inom_{\omega}(f)=\inom_{\omega'}(f)$. Since $\wt_{\omega'}(x_{i_{n-m}}x_{i_r})=\wt_{\omega'}(x_{i_{n-m+1}}x_{i_r})>\wt_{\omega'}(x_{i_{n-m}}x_{i_k})$ for $r\in\left\{n-t+1,\ldots,n\right\}$, terms of this form cannot occur as the leading term. Assume that $\inom_{\succ_{\omega'}}(f)=x_{i_a} x_{i_b}$ for some $a,b\in\left\{n-m+2,\ldots,n-t\right\}$. Then $x_{i_a} x_{i_b}\in \gin_{\succ_{\omega'}}(I)$. But we know that $\dim\gin_{\succ_{\omega'}}(I)=\dim I=m$ and $x_{i_1}\succ_{\omega'}\ldots\succ_{\omega'} x_{i_{n-m}}\succ_{\omega'} x_{i_j}$ for $j>n-m$. By Proposition \ref{timsmail} this implies that $\gin_{\succ_{\omega'}}(I)$ cannot contain a monomial which does not divide one of $x_{i_1},\ldots,x_{n-m}$ which is a contradiction to $x_{i_a} x_{i_b}\in \gin_{\succ_{\omega'}}(I)$.
\end{enumerate}
We have now shown that $\inom_{\omega}(f)=\inom_{\omega'}(f)$ for every $f\in\Gc$. Hence, by Lemma \ref{wiese} we have $\inom_{\omega}(g(I))=\inom_{\omega'}(g(I))$ for $g\in U$. Thus every maximal cone of $\Wc_n^{m,t}$ is contained in a maximal cone of $\gT(I)$. The claim now follows from this together with Proposition \ref{hunt}.
\end{proof}

This result of course raises the question if it is always true that $\gT(I)=\Wc_n^{m,t}$ for strongly stable ideal or even maximal-$\gdepth$ ideals $I\subset K[x_1,\ldots,x_n]$ with $\dim I=m$ and $0<\depth(I)=t<m-1$. Computations with gfan indicate that this is not the case. For example the ideal $I=(x_1^2,x_1x_2,x_1x_3^2,x_1x_3x_4)\subset K[x_1,\ldots,x_5]$ is strongly stable with respect to $x_1>\ldots>x_5$ and has dimension $\dim I=4$ and $\depth(I)=1$ by Proposition \ref{timsmail}. However, computing $\gT(I)$ with gfan yields that $\gT(I)$ has $60$ maximal cones. Thus $\gT(I)\neq \Wc_5^{4,1}$ which has only $30$ maximal cones.

\section{Multiplicities}

In the following we abbreviate $K[x_1,\ldots,x_n]$ by $S$ if this causes no confusion. For a finitely generated graded $S$-module $M$ we denote by $H_M(t)$ the Hilbert series of $M$. Recall that the Hilbert series of $0\neq M$ can be written as $$H_M(t)=\frac{Q_M(t)}{(1-t)^d},$$ where $Q_M(t)\in \Z[t,t^{-1}]$ is a Laurent polynomial with $Q_M(1)\neq0$ and $d$ is the Krull dimension of $M$. It is well known that $Q_M(1)\neq 0$ and this number is called the \emph{multiplicity $m(I)$ of $I$} of $M$. As always we set $m(I)=m(S/I)=Q_{S/I}(1)$ for a graded ideal $I$.

To express the multiplicity of $I$ in terms of the multiplicities of its minimal primes we use the following formula known as the associativity formula for multiplicity. Note that all minimal prime ideals of a graded ideal are graded themselves. For a minimal prime ideal $P$ of $I$ let $\ell((S/I)_P)$ denote the length of the localization of the $S$-module $S/I$ at $P$. We then have $$m(I)=\sum \ell((S/I)_P) m(P),$$ where the sum is taken over all minimal primes of $I$ such that $\dim I=\dim P$; see \cite[Formula (9.\ 4)]{VA}.

We define the multiplicity of a maximal cone in $T(I)$ in a slightly more general setting as done in \cite{DIFEST}. In \cite{DIFEST} the multiplicity of a maximal cone $C$ in $T(P)$ for a prime ideal $P$ is defined as the sum of the multiplicities of all monomial-free minimal primes of the initial ideal $\inom_C(P)$ corresponding to $C$. Note that by \cite[Theorem 1]{GR} for every minimal prime $Q$ of $\inom_C(P)$ we have $\dim Q=\dim \inom_C(P)$. For an arbitrary ideal $I\subset K[x_1,\ldots,x_n]$ this is not true and in our definition we consider only those prime ideals of $\inom_C(I)$ which have the same dimension as $\inom_C(I)$.

\begin{defn}
Let $I\subset S=K[x_1,\ldots,x_n]$ be a graded ideal and $C$ be a maximal cone of $T(I)$. Let $J=\inom_C(I)$ be the initial ideal of $I$ corresponding to $C$. Then the \emph{intrinsic multiplicity} $m(C)$ of $C$ is defined as $m(C)=\sum \ell((S/J)_P)$, where the sum is taken over all minimal primes of $J$ with $\dim P=\dim J$ which do not contain a monomial.
\end{defn}

Note that in general $T(I)$ need not be pure, so in general this definition of intrinsic multiplicities will not give rise to a tropical fan as defined in \cite[Definition 2.8]{GAKEMA}. However, we only need this definition for generic tropical varieties and these are pure by Proposition \ref{baumann}. Even if $I$ is a radical ideal and $T(I)$ a pure fan, the multiplicity of the cones of $T(I)$ need not have anything to do with the multiplicity of $I$ as the following example shows.

\begin{ex}
Let $0\leq k\leq n$ and $f_k=x_1\cdots x_k (x_1+x_2)\in K[x_1,\ldots,x_n]$. Then $m(f_k)=\deg(f_k)=k+1$. But we see that the tropical variety $T(I)=\left\{\omega\in\R^n:\omega_1=\omega_2 \right\}$ consists of only one cone. The corresponding initial ideal is $(f_k)$. By factorization this has only one monomial-free minimal prime ideal which is $(x_1+x_2)$. As $\ell((S/(f_k))_{(x_1+x_2)})=1$ the only cone of $\gT(I)$ has multiplicity $1$. So in general it is impossible to obtain the multiplicity of the ideal from the multiplicity of the maximal cones of the tropical variety, at least for ideals which are not prime.
\end{ex}

In contrast, we can now prove that generically the intrinsic multiplicities of the maximal cones in the tropical variety are constant and equal the multiplicity of the ideal. For this we first show that for a graded ideal $I$ the minimal prime ideals of the initial ideals of $I$ that correspond to the maximal cones in $\gT(I)$ contain no monomial.

\begin{prop}\label{diego}
Let $I\subset K[x_1,\ldots,x_n]$ be a graded ideal with $\dim I=m$. Let $C$ be a maximal cone of $\gT(I)$ and $\omega\in\mathring{C}$. Then no minimal prime $P$ of $\inom_{\omega}(g(I))$ with $\dim P=m$ contains a monomial for $g\in U$.
\end{prop}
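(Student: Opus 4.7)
The plan is to argue by contradiction: suppose some minimal prime $P$ of $J:=\inom_\omega(g(I))$ satisfies $\dim P=m$ and contains a variable $x_k$. First I would record the tropical picture near $\omega$. Since $\gT(I)$ is a pure $m$-dimensional fan (combining \cite[Corollary 8.4]{TK} with Proposition \ref{baumann}) and $\omega$ lies in the relative interior of its maximal cone $C$, the tropical variety of the degeneration satisfies $T(J)=\{v\in\R^n : \omega+\epsilon v\in\gT(I)\text{ for small }\epsilon>0\}$, which equals the $m$-dimensional linear span of $C$: no direction outside this span keeps $\omega+\epsilon v$ in $\gT(I)$ because $C$ is maximal and hence not a face of any larger cone. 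Combining this with the standard identity $T(J')=\bigcup_{P'\in\minAss(J')}T(P')$ and the observation that $T(P')=\emptyset$ whenever $P'$ contains a monomial (since $\inom_v(x_k)=x_k$ for every $v$), the full linear span of $C$ is covered by tropical varieties of the monomial-free minimal primes of $J$ only, and the hypothesized $P$ contributes nothing to the union.

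Next I would reduce to the minimal primes of $g(I)$. Let $Q_1,\ldots,Q_r$ be the minimal primes of $I$ of dimension $m$; for $g\in U$ (further shrinking $U$ if needed), each $g(Q_j)$ is a dimension-$m$ minimal prime of $g(I)$ and is monomial-free, because being contained in a coordinate hyperplane is a Zariski-closed condition on $\GL_n(K)$. Since each $Q_j$ is prime of dimension $m$, \cite[Corollary 8.4]{TK} applied to $Q_j$ gives $T(g(Q_j))=\Wc_n^m$ as a set for $g\in U$; in particular $\omega\in T(g(Q_j))$ for every $j$, so $\inom_\omega(g(Q_j))$ is monomial-free. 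Using that taking radicals commutes with forming $\omega$-initial ideals up to a further radical, one obtains $\sqrt{J}=\bigcap_Q\sqrt{\inom_\omega(Q)}$ over all minimal primes $Q$ of $g(I)$; combined with \cite[Theorem 1]{GR}, which gives $\dim Q'=\dim Q$ for every minimal prime $Q'$ of $\inom_\omega(Q)$, every dimension-$m$ minimal prime of $J$ must arise as a minimal prime of $\inom_\omega(g(Q_j))$ for some $j$.

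The crux and main obstacle is then to show that for each prime $Q=g(Q_j)$ of dimension $m$ with $T(Q)=\Wc_n^m$ and $\omega\in T(Q)$, every minimal prime of $\inom_\omega(Q)$ is monomial-free. My approach is a multiplicity/tropical-degree comparison: conservation of multiplicity under the flat Gröbner degeneration $Q\leadsto\inom_\omega(Q)$ gives $m(\inom_\omega(Q))=m(Q)$, while the associativity formula expresses the left-hand side as $\sum_{P'}\ell\bigl((S/\inom_\omega(Q))_{P'}\bigr)\,m(P')$ over the dimension-$m$ minimal primes $P'$ of $\inom_\omega(Q)$. Since $T(\inom_\omega(Q))$ is pure of dimension $m$ by \cite[Theorem 1]{GR} and equals the union of the $T(P')$ over monomial-free minimal primes only, a tropical degree comparison (via intersection with a generic linear subspace of complementary dimension) would show that the monomial-free minimal primes alone account for the full multiplicity $m(Q)$, leaving no room for a monomial-containing minimal prime of dimension $m$ in $\inom_\omega(Q)$ and therefore none in $J$ either — contradicting $x_k\in P$.
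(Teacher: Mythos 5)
Your reduction to the prime case is sound in outline (the product trick $\bigl(\prod_Q \inom_\omega(Q)\bigr)^N\subset \inom_\omega\bigl((\prod_Q Q)^N\bigr)\subset J$ does show that every dimension-$m$ minimal prime of $J$ contains, hence is a minimal prime of, some $\inom_\omega(g(Q_j))$), but the step you yourself flag as the crux is a genuine gap, not a routine verification. The statement you need there --- for a prime $Q$ with $\omega\in T(Q)$, every dimension-$m$ minimal prime of $\inom_\omega(Q)$ is monomial-free, because the monomial-free components already exhaust the multiplicity --- is false without genericity. Take $Q=(x_1^2+x_1x_2+x_3x_4)\subset K[x_1,\ldots,x_4]$ and $\omega=(0,0,1,1)$: then $Q$ is prime of dimension $3$, $\omega\in T(Q)$, but $\inom_\omega(Q)=(x_1(x_1+x_2))$ has the dimension-$3$ minimal prime $(x_1)$, and the monomial-free component $(x_1+x_2)$ accounts for only $1$ of the multiplicity $m(Q)=2$. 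So the desired conclusion cannot be a formal consequence of multiplicity conservation plus the associativity formula plus a ``tropical degree comparison''; whatever comparison you invoke must use genericity of the coordinates in an essential way that your sketch never identifies. Worse, the assertion that the monomial-free minimal primes carry the full multiplicity in the generic situation is essentially the content of the paper's Theorem 6.4, which is proved \emph{after} and \emph{using} Proposition \ref{diego} (together with Proposition \ref{artmann}); as written, your argument is circular at the decisive step. (Your opening observation that $T(J)$ is the star of $\gT(I)$ at $\omega$, i.e.\ the span of $C$, is correct but plays no role in closing the argument.)

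The paper's proof is short, elementary, and pinpoints exactly where genericity enters. Assuming WLOG $\omega_1=\cdots=\omega_{n-m+1}<\omega_j$ for $j>n-m+1$ and $x_k\in P$, choose $n-m$ indices $i_1,\ldots,i_{n-m}$ in $\{1,\ldots,n-m+1\}\backslash\{k\}$ and a term order $\succ$ making $x_{i_1},\ldots,x_{i_{n-m}}$ the largest variables. Then $\inom_\succ(\inom_\omega(g(I)))=\gin_{\succ_\omega}(I)$ is a generic initial ideal, hence strongly stable for that variable order, hence has the \emph{unique} minimal prime $(x_{i_1},\ldots,x_{i_{n-m}})$, which avoids $x_k$. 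On the other hand, any minimal prime of $\inom_\succ(P)$ is, by the dimension count $\dim\gin_{\succ_\omega}(I)=\dim\inom_\succ(P)=m$, also a minimal prime of $\gin_{\succ_\omega}(I)$, yet it must contain $x_k\in\inom_\succ(P)$ --- a contradiction. The freedom to choose the term order so as to dodge the offending variable, combined with the uniqueness of the minimal prime of a generic initial ideal, is the ingredient your proposal is missing.
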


\begin{proof}
Since $\gT(I)=\Wc_n^m$ as a set, we can assume $\omega_1=\ldots=\omega_{n-m+1}<\omega_j$ for $j>n-m+1$ without loss of generality. For $g\in U$ let $\inom_{\omega}(g(I))\subset P$ be a minimal prime ideal with $\dim P=m$. Assume that $P$ contains a monomial $x^{\nu}$. Since $P$ is prime, this implies that $P$ contains a variable $x_k$ for some $k$. We choose $\left\{i_1,\ldots,i_{n-m}\right\}\subset\left\{1,\ldots,n-m+1\right\}\backslash \left\{k\right\}$ and a term order $\succ$ such that $$x_{i_1}\succ x_{i_2}\succ \ldots \succ x_{i_{n-m}}\succ x_j \text{ for } j\notin \left\{i_1,\ldots,i_{n-m}\right\}.$$ We have $\gin_{\succ_{\omega}}(I)=\inom_{\succ}(\inom_{\omega}(g(I)))\subset \inom_{\succ}(P)$ with $\dim \gin_{\succ_{\omega}}(I)=\dim \inom_{\succ}(P)=m$. Let $Q$ be a minimal prime of $\inom_{\succ}(P)$. Since the dimensions coincide, $Q$ is also a minimal prime of $\gin_{\succ_{\omega}}(I)$. But $\gin_{\succ_{\omega}}(I)$ has only one minimal prime which is $(x_{i_1},\ldots,x_{i_{n-m}})$ by the choice of the term order $\succ$ (see for example \cite[Corollary 15.25]{E}). Hence, $Q$ does not contain $x_k$. This is a contradiction to the fact that $x_k\in P$ and therefore $x_k\in \inom_{\succ}(P)\subset Q$. Thus, $P$ cannot contain a monomial.
\end{proof}

\begin{rem}
Note that together with \cite[Lemma 8.2]{TK}, where $\gT(I)$ can be replaced by $T(g(I))$ for every $g\in U$, and with \cite[Corollary 3.2]{TK} this gives another, simpler proof that generic tropical varieties exist as described in \cite{TK}.
\end{rem}

To use the associativity formula for multiplicities to show that $m(C)=m(I)$ in generic tropical varieties we need to show that generically all minimal primes of $\inom_{\omega}(g(I))$ have multiplicity $1$. This we do by showing that they are linear, i.e. generated by linear forms.

We need the following two general statements and include proofs for the convenience of the reader.

\begin{lemma}\label{schmidt}
Let $P\subset K[x_1,\ldots,x_n]$ be a graded prime ideal with $\dim P=1$. Then $P$ is a linear ideal.
\end{lemma}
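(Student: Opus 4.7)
The plan is to show that $P$ contains $n-1$ linearly independent linear forms, and then deduce equality $P=(P_1)$ by a dimension argument.

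First I would examine the vector space $L=P_1\subset S_1$ of linear forms in $P$ and argue that $\dim_K L\geq n-1$. Suppose for contradiction that $\dim_K L\leq n-2$. After a linear change of coordinates we may then assume that no nonzero linear combination $\alpha x_1+\beta x_2$ lies in $P$. Since $\dim S/P=1$, the Hilbert function $d\mapsto \dim_K(S/P)_d$ is bounded (it eventually equals the multiplicity). For $d$ sufficiently large, the $d+1$ degree-$d$ elements $\bar x_1^i\bar x_2^{d-i}\in(S/P)_d$ must therefore be $K$-linearly dependent, producing a nonzero homogeneous $f(x_1,x_2)\in P$.

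The main obstacle, and the place where the hypothesis on $K$ enters essentially, is what to do with this $f$. Because $K$ is algebraically closed, any nonzero homogeneous polynomial in two variables splits into linear factors: $f=\prod_i(\alpha_i x_1+\beta_i x_2)$ with $(\alpha_i,\beta_i)\neq 0$. Since $P$ is prime, at least one factor $\alpha_i x_1+\beta_i x_2$ must lie in $P$, contradicting the assumption that no nonzero element of $\mathrm{Span}(x_1,x_2)$ lies in $P$. Hence $\dim_K L\geq n-1$.

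Finally, I would choose $n-1$ linearly independent linear forms $\ell_1,\dots,\ell_{n-1}\in L$ and set $Q=(\ell_1,\dots,\ell_{n-1})$. After a coordinate change $Q$ may be written as $(x_2,\dots,x_n)$, so $Q$ is itself a (prime) linear ideal with $\dim S/Q=1$. Since $Q\subset P$ are both graded primes of the same dimension $1$, they must coincide: any strict containment $Q\subsetneq P$ would force $\dim S/P<\dim S/Q=1$. Therefore $P=Q$ is generated by linear forms, as claimed.
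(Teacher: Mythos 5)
Your argument is correct, but it takes a genuinely different route from the paper. The paper's proof is geometric and very short: since $P$ is a graded prime with $\dim P=1$, the Nullstellensatz gives a point $0\neq a\in V(P)$, the cone structure of $V(P)$ forces the whole line $Ka=V(Q)$ into $V(P)$ for the linear prime $Q=(a_ix_j-a_jx_i: i<j)$, and then $P\subset Q$ with both primes one-dimensional yields $P=Q$. You instead work entirely on the ideal side: you show $\dim_K P_1\geq n-1$ by combining the boundedness of the Hilbert function of a one-dimensional graded quotient with the splitting of binary forms over the algebraically closed field $K$ and primeness of $P$, and then conclude by the standard fact that a proper inclusion of primes in an affine domain strictly drops dimension. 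Both proofs use algebraic closedness in an essential way (yours through factoring $f(x_1,x_2)$ into linear forms, the paper's through the Nullstellensatz), and each step of yours is sound: the Hilbert polynomial of $S/P$ is a constant, so for large $d$ the $d+1$ classes $\bar x_1^i\bar x_2^{d-i}$ are dependent, and the resulting binary form in $P$ must contribute a linear factor to $P$. The paper's proof is shorter and more conceptual; yours is more self-contained on the commutative-algebra side and makes explicit exactly where the linear forms in $P$ come from, which fits well with how the lemma is used later (to see that the minimal primes are linear and hence have multiplicity one).
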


\begin{proof}
As $P\neq (x_1,\ldots,x_n)$, we know that $V(P)\neq\left\{0\right\}$. Let $0\neq a=(a_1,\ldots,a_n)\in V(P)\subset K^n$. Then $V(Q)=K(a_1,\ldots,a_n)$ for the linear ideal $Q=(a_ix_j-a_jx_i:i<j)$. Since $V(Q)\subset V(P)$ and both are prime, this implies $P\subset Q\subset (x_1,\ldots,x_n)$. But $\dim P=1$ and $Q\neq (x_1,\ldots,x_n)$, hence $P=Q$ is linear.
\end{proof}

\begin{lemma}\label{andersen}
For a fixed $t<n$ denote $K[x_1,\ldots,x_t]$ by $R$. Let $J\subset S=K[x_1,\ldots,x_n]$ be a graded ideal and $J\subset P\subset S$ be a minimal prime of $J$ with $\dim J=\dim P=m$. If $(J\cap R)S=J$, then also $(P\cap R)S=P$.
\end{lemma}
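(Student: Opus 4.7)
The plan is to exploit the standard behavior of primes under the polynomial extension $R \subset S = R[x_{t+1}, \ldots, x_n]$ and show that every minimal prime of $J = J'S$ is the extension of a minimal prime of $J' := J \cap R$ from $R$, so that the specific minimal prime $P$ must satisfy $P = (P \cap R)S$.

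First I would record the standard fact that, since $S$ is a polynomial ring over $R$, extension and contraction along $R \hookrightarrow S$ behave well on primes: for any prime $Q' \subset R$, the ideal $Q'S$ is prime in $S$ (since $S/Q'S \cong (R/Q')[x_{t+1}, \ldots, x_n]$ is a domain), and $(Q'S) \cap R = Q'$. Setting $P' := P \cap R$, the ideal $P'$ is a graded prime of $R$ containing $J'$, so I can choose a minimal prime $Q'$ of $J'$ with $Q' \subset P'$. Then the chain $J = J'S \subset Q'S \subset P'S \subset P$ places $Q'S$ between $J$ and $P$ as a prime of $S$ containing $J$.

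The heart of the argument is showing that $Q'S$ is itself a minimal prime of $J$: if $P'' \subset Q'S$ is any prime of $S$ containing $J$, contracting yields $J' \subset P'' \cap R \subset Q'$, and minimality of $Q'$ over $J'$ forces $P'' \cap R = Q'$, whence $P'' \supset Q'S$ and $P'' = Q'S$. Granting this, $Q'S$ and $P$ are both primes minimal over $J$ with $Q'S \subset P$, hence $Q'S = P$; contracting to $R$ gives $Q' = P \cap R = P'$, and therefore $P = Q'S = P'S$, as required.

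The main delicacy lies in this minimality transfer from $R$ to $S$: one has to use both that extension preserves primeness and that any ideal of $S$ containing $Q'$ automatically contains $Q'S$. Notably, the dimension hypothesis $\dim J = \dim P = m$ plays no role in the argument --- the conclusion holds for any minimal prime of $J$ --- which suggests the authors carry it through the statement only because it matches the hypotheses under which the lemma is applied later.
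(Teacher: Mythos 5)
Your proof is correct and rests on the same two facts as the paper's: the extension of a prime along $R\hookrightarrow S$ is again prime, and $P$ is minimal over $J$. The paper's argument is more direct --- it simply notes that $(P\cap R)S$ is prime and that $J=(J\cap R)S\subset (P\cap R)S\subset P$, so minimality of $P$ forces $(P\cap R)S=P$ --- whereas your detour through a minimal prime $Q'$ of $J\cap R$ and the verification that $Q'S$ is minimal over $J$ is sound but unnecessary; your observation that the dimension hypothesis plays no role is also accurate.
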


\begin{proof}
It is clear that $(P\cap R)S\subset P$. As $J\subset P$, we know that $J=(J\cap R)S\subset (P\cap R)S\subset P$. Since $P$ is prime, so are $P\cap R$ and $(P\cap R)S$. But $P$ is a minimal prime of $J$, hence, $(P\cap R)S=P$.
\end{proof}

With this we can prove that for $I\subset K[x_1,\ldots,x_n]$ the minimal primes of the initial ideals corresponding to the maximal cones of $\gT(I)$ of the same dimension as $I$ have multiplicity $1$.

\begin{prop}\label{artmann}
Let $I\subset S=K[x_1,\ldots,x_n]$ be a graded ideal with $\dim I=m$ and $\omega\in \mathring{C}$ for some maximal cone $C$ of $\gT(I)$. Then for every $g\in U$ every minimal prime $P$ of $\inom_{\omega}(g(I))$ with $\dim P=\dim \inom_{\omega}(g(I))$ is a linear ideal. In particular, $m(P)=1$.
\end{prop}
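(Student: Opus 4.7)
The plan is to reduce the statement to Lemma \ref{schmidt} by cutting down to the smaller polynomial ring $R = K[x_1,\ldots,x_{n-m+1}]$. Since $\gT(I)$ has the same underlying set as $\Wc_n^m$, after permuting coordinates I may assume $\omega_1=\ldots=\omega_{n-m+1}<\omega_j$ for all $j>n-m+1$. For any homogeneous $f\in g(I)$ every term of minimal $\omega$-weight must be supported on the indices where $\omega$ attains its minimum, so $\inom_\omega(f)\in R$. Applying this to a set of generators of $g(I)$ shows that $\inom_\omega(g(I))$ is generated by elements of $R$, which is precisely the condition
$$\inom_\omega(g(I))=\bigl(\inom_\omega(g(I))\cap R\bigr)\cdot S.$$

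Next I would invoke Lemma \ref{andersen} with $J=\inom_\omega(g(I))$ and the given minimal prime $P$ (whose dimension equals $\dim J=\dim g(I)=m$). This yields $P=(P\cap R)\cdot S$, so it suffices to prove that $P\cap R$ is a linear ideal of $R$. Since $S$ arises from $R$ by adjoining the $m-1$ free variables $x_{n-m+2},\ldots,x_n$, the dimension formula for polynomial extensions gives
$$\dim_R\bigl(R/(P\cap R)\bigr)=\dim_S(S/P)-(m-1)=m-(m-1)=1.$$
Thus $P\cap R$ is a graded prime of Krull dimension one in $R$.

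To apply Lemma \ref{schmidt} I still must exclude the irrelevant maximal ideal of $R$, and this is exactly where Proposition \ref{diego} enters: the minimal prime $P$ contains no monomial of $S$, so in particular none of the variables $x_1,\ldots,x_{n-m+1}$ lies in $P$, and hence $P\cap R\neq(x_1,\ldots,x_{n-m+1})$. Lemma \ref{schmidt} now shows that $P\cap R$ is generated by linear forms in $R$, and extending these generators to $S$ exhibits $P=(P\cap R)\cdot S$ as a linear ideal of $S$.

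Finally, for the multiplicity assertion, a linear prime ideal $P\subset S$ is generated by some linearly independent linear forms $\ell_1,\ldots,\ell_k$, and therefore $S/P$ is isomorphic to a polynomial ring in $n-k$ variables. Its Hilbert series is $1/(1-t)^{n-k}$, so $Q_{S/P}(t)=1$ and consequently $m(P)=1$. The main technical step in this plan is the reduction in the first paragraph, which hinges on the precise location of the minimum of $\omega$; the remaining steps are bookkeeping with dimension under the polynomial extension $R\hookrightarrow S$ and the careful use of Proposition \ref{diego} to avoid the irrelevant maximal ideal.
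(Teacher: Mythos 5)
Your overall strategy (reduce to $R=K[x_1,\ldots,x_{n-m+1}]$ via Lemma \ref{andersen}, then apply Lemma \ref{schmidt} after a dimension count) is the right one and matches the paper's, but the first paragraph, which you correctly identify as the main technical step, contains a genuine error. It is not true that $\inom_\omega(f)\in R$ for every homogeneous $f\in g(I)$: a general element of $g(I)$ need not have any term supported on the indices where $\omega$ is minimal (for instance $x_nf$ for $f\in g(I)$ has every term divisible by $x_n$, so its initial form is not in $R$). Your statement would only be plausible for polynomials that do possess a term in $R$, and even then one cannot pass from a generating set to the initial ideal without a Gr\"obner basis with respect to $\succ_\omega$. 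More seriously, even for the reduced Gr\"obner basis the conclusion fails: the minimal generators of $\gin_{\succ_\omega}(I)$ live in $K[x_1,\ldots,x_{n-\depth(I)}]$ by Proposition \ref{timsmail}, which strictly contains $R$ whenever $\depth(I)<m-1$, so some Gr\"obner basis elements have initial forms involving $x_{n-m+2},\ldots,x_n$. The identity $\inom_\omega(g(I))=\bigl(\inom_\omega(g(I))\cap R\bigr)S$ is therefore false in general; the ideal of Proposition \ref{tziolis} gives an explicit counterexample, where the relevant initial forms include binomials such as $x_{i_{n-m}}x_{i_k}+x_{i_{n-m+1}}x_{i_k}$ with $i_k$ outside the minimal block.

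The paper avoids this by working with a carefully chosen subideal rather than with $\inom_\omega(g(I))$ itself. Taking the reduced Gr\"obner basis $\Gc=\{f_1,\ldots,f_t\}$ of $g(I)$ with respect to $\succ_\omega$, it sets $J=(\inom_\omega(f_i):i\in A)$ where $A$ consists of those $i$ with $\inom_{\succ_\omega}(f_i)\in K[x_1,\ldots,x_{n-m}]$; for these $f_i$ the minimal-weight terms do lie in $R$, so $J=(J\cap R)S$. The crux is then to show $\dim J=m$, which the paper does by observing that $x_{n-m}^d$ is a minimal generator of the strongly stable ideal $\gin_{\succ_\omega}(I)$ (Proposition \ref{timsmail}), hence lies in the monomial ideal $\tilde J$ generated by the chosen leading terms, forcing $\dim\tilde J\le m$ and then $\dim J=m$. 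Since $J\subset\inom_\omega(g(I))\subset P$ all have dimension $m$, the prime $P$ is also minimal over $J$, and Lemma \ref{andersen} is applied to $J$, not to $\inom_\omega(g(I))$. Your remaining steps (the dimension count giving $\dim_R(R/(P\cap R))=1$, the application of Lemma \ref{schmidt}, and the Hilbert series computation for $m(P)=1$) are fine; note that the exclusion of the irrelevant ideal of $R$ already follows from $\dim_R(R/(P\cap R))=1$, so the appeal to Proposition \ref{diego} there is unnecessary, though harmless. To repair your argument you must replace the false reduction in the first paragraph with a construction along the lines of the subideal $J$ above.
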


\begin{proof}
Without loss of generality we can assume $\omega_1=\ldots=\omega_{n-m+1}<\omega_j$ for $j>n-m+1$. Let $g\in U$ and $\inom_{\omega}(g(I))\subset P$ be a minimal prime with $\dim P=m$. Let $\Gc=\left\{f_1,\ldots,f_t\right\}$ be a reduced Gröbner basis of $g(I)$ with respect to $\succ_{\omega}$ for a term order $\succ$ with $x_1\succ \ldots\succ x_n$. Then $$(\inom_{\succ_{\omega}}(f_i):i=1,\ldots,t)=\inom_{\succ}(\inom_{\omega}(g(I)))=\gin_{\succ_{\omega}}(I).$$ Note that $x_1\succ_{\omega}\ldots\succ_{\omega} x_{n-m+1} \succ_{\omega} x_j$ for $j>n-m+1$. Let $A\subset \left\{1,\ldots,t\right\}$ be the set of all indices $i$ such that $\inom_{\succ_{\omega}}(f_i)\in K[x_1,\ldots,x_{n-m}]$. We define $\tilde{J}=(\inom_{\succ_{\omega}}(f_i): i\in A)$ to be the ideal generated by all initial forms of elements in $\Gc$ which are not divisible by $x_{n-m+1},\ldots,x_n$. Since $\tilde{J}\subset\gin_{\succ_{\omega}}(I)$, we know that $\dim \tilde{J}\geq m$. As $\gin_{\succ_{\omega}}(I)$ is a strongly stable ideal, by Proposition \ref{timsmail} there exists $1\leq k\leq t$ such that $\inom_{\succ_{\omega}}(f_k)=x_{n-m}^d$ for some $d\in\N$. Hence, $x_{n-m}^d\in \tilde{J}$. But $\tilde{J}$ is also a strongly stable ideal, so again by Proposition \ref{timsmail} it follows that $\dim \tilde{J}\leq m$. Thus, $\dim \tilde{J}=m$. We set $J=(\inom_{\omega}(f_i): i\in A)$. Then we have $$m=\dim \inom_{\omega}(g(I))\leq \dim J=\dim \inom_{\succ}(J)\leq\dim \tilde{J}=m,$$ where the first inequality holds, since $J\subset\inom_{\omega}(g(I))$, and the second one, because $\tilde{J}\subset \inom_{\succ}(J)$. So $\dim J=m$. Since $J\subset \inom_{\omega}(g(I))\subset P$ and all have the same dimension, $P$ is also a minimal prime ideal of $J$. For $i\in A$ every term of $f_i$ that has minimal $\omega$-weight has to be a term in $K[x_1,\ldots,x_{n-m+1}]$ by the choice of $\omega$. So we know that $J=(J\cap K[x_1,\ldots,x_{n-m+1}])S$. From Lemma \ref{andersen} it now follows that $P=(P\cap K[x_1,\ldots,x_{n-m+1}])S$. The ideal $\tilde{P}=P\cap K[x_1,\ldots,x_{n-m+1}]$ has dimension $m-(m-1)=1$ in $K[x_1,\ldots,x_{n-m+1}]$. By Lemma \ref{schmidt} we know that $\tilde{P}$ is linear. So $P=\tilde{P}S$ is linear as well and in particular, $m(P)=1$.
\end{proof}

\begin{thm}
Let $I\subset S=K[x_1,\ldots,x_n]$ be a graded ideal with $\dim I=m$. Then for $g\in U$ and any maximal cone $C$ of $T(g(I))$ we have $m(C)=m(I)$ is constant and equals the multiplicity of $I$.
\end{thm}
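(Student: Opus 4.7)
The plan is to combine the associativity formula for multiplicity with the preceding two propositions, and to use the invariance of the Hilbert series under initial ideals and under coordinate change.

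First I would fix $g \in U$, a maximal cone $C$ of $T(g(I))$, and $\omega \in \mathring{C}$, and set $J = \inom_\omega(g(I))$. By Proposition \ref{diego}, every minimal prime $P$ of $J$ with $\dim P = m$ fails to contain a monomial; in other words, the sum
$$m(C) = \sum \ell((S/J)_P)$$
in the definition of the intrinsic multiplicity ranges over \emph{all} minimal primes of $J$ of dimension $m$, not just the monomial-free ones. Next, Proposition \ref{artmann} tells me that each such minimal prime $P$ is linear, and in particular $m(P) = 1$.

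With these two observations in hand, I apply the associativity formula for multiplicity to the ideal $J$:
$$m(J) = \sum \ell((S/J)_P) m(P),$$
where the sum is over minimal primes $P$ of $J$ with $\dim P = \dim J = m$. Since every such $m(P)$ equals $1$, the right-hand side is precisely $m(C)$, so $m(C) = m(J)$.

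Finally I would relate $m(J)$ back to $m(I)$. Because $J = \inom_\omega(g(I))$ is an initial ideal of the graded ideal $g(I)$ with respect to a weight vector, $S/J$ and $S/g(I)$ share the same Hilbert series, hence the same multiplicity: $m(J) = m(g(I))$. And since $g$ is a $K$-algebra automorphism of $S$, the graded rings $S/I$ and $S/g(I)$ are isomorphic as graded $K$-vector spaces, giving $m(g(I)) = m(I)$. Chaining these equalities yields $m(C) = m(I)$, which is independent of both $C$ and the specific choice of $g \in U$. The argument is essentially an assembly: the real work has been done in Propositions \ref{diego} and \ref{artmann}, so no step here should present a serious obstacle beyond invoking the associativity formula in the correct form.
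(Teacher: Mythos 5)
Your proposal is correct and follows exactly the paper's own argument: Proposition \ref{diego} to see that all dimension-$m$ minimal primes of $\inom_C(g(I))$ are monomial-free, Proposition \ref{artmann} to get $m(P)=1$ for each, the associativity formula to identify $m(C)$ with $m(\inom_C(g(I)))$, and invariance of the Hilbert series under passage to initial ideals and under coordinate change to conclude $m(C)=m(I)$. No gaps.
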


\begin{proof}
First of all note that the Hilbert series and thus the multiplicity of $I$ does not change if one passes to any initial ideal of $I$ (see for example \cite[Theorem 15.26]{E}). Moreover, the Hilbert series is of course not affected by coordinate change.

By Proposition \ref{diego} for $g\in U$ and any maximal cone $C$ of $T(g(I))=\gT(I)$ we know that every minimal prime of $\inom_{C}(g(I))$ of dimension $m$ does not contain a monomial. Moreover, by Proposition \ref{artmann} every such minimal prime of $\inom_{C}(g(I))$ has multiplicity $m(P)=1$. Thus with associativity formula for multiplicities we get $$m(C)=\sum \ell((S/\inom_{C}(g(I)))_P)=\sum \ell((S/\inom_{C}(g(I)))_P)m(P)=m(\inom_C(g(I)))=m(I),$$ as the sum is taken over all minimal primes of $\inom_C(g(I))$ of dimension $m$.
\end{proof}

\begin{rem}
The fan $\gT(I)$ equipped with the weights $m(C)$ for the maximal cones $C\in \gT(I)$ is a tropical fan in the sense of \cite[Definition 2.8]{GAKEMA}. It can be shown directly by elementary methods that the balancing condition is fulfilled for each cone of dimension $\dim I-1$. See \cite[Theorem 2.5.1]{SP} for a proof in a more general case.
\end{rem}

We briefly explain \cite[Example ${\bf (1)}$]{DIFEST} in our case. This example states that for an irreducible polynomial $f\in K[x_1,\ldots,x_n]$ the intrinsic multiplicity $m(C)$ of a given cone $C$ of $T(f)$ is exactly the lattice length of the edge corresponding to $C$ of the Newton polytope of $f$. Here, the lattice length of an edge is defined as the number of integer points on this edge minus $1$.

\begin{ex}
Let $0\neq f\in K[x_1,\ldots,x_n]$ be a homogeneous polynomial of degree $t$. Then every maximal cone of $\gT(f)$ has multiplicity $t$ as $m(g(f))=\deg g(f)=t$ for every $g\in U$. Let $N(g(f))$ be the Newton polytope of $g(f)$ for $g\in U$. By \cite[Lemma 9.1]{TK} for $g\in U$ we know that $$N(g(f))=\conv(te_1,\ldots,te_n),$$ where $e_1,\ldots,e_n$ are the standard basis vectors in $\R^n$. Now a maximal cone $C$ of $\gT(f)$ is given by $$C=\left\{\omega\in \R^n: \omega_{i_1}=\omega_{i_2}\leq\omega_{i_j} \text{ for } j\neq 1,2\right\}$$ for some coordinates $i_1,i_2$. This corresponds to the edge $\conv(te_{i_1},te_{i_2})$ of $N(g(f))$ for $g\in U$. This edge has lattice length $t$, that is $\left|\left\{\Z^n\cap\conv(te_{i_1},te_{i_2}) \right\}\right|=t+1$. So the lattice length coincides with the intrinsic multiplicity $m(C)$.
\end{ex}

\end{document}